\documentclass[11pt]{article}

\usepackage[margin=1in]{geometry}
\usepackage{parskip}                
\usepackage{amsmath}                
\usepackage{amssymb}                
\usepackage{amsthm}
\usepackage{mathtools}              
\usepackage{bm}                     
\usepackage{array}                  
\usepackage{booktabs}
\usepackage{multirow}
\usepackage{xcolor}                 
\usepackage[square]{natbib}         
\usepackage[colorlinks=true,citecolor=blue!75!black,urlcolor=blue!75!black]{hyperref}  
\usepackage[capitalise]{cleveref}
\usepackage{algorithm}
\usepackage{algpseudocode}
\usepackage{tikz}
\usepackage[shortlabels]{enumitem}

\usepackage{siunitx}
\allowdisplaybreaks

\newcommand{\set}[1]{\left\{#1\right\}}

\def\N{{\mathbb{N}}}

\def\P{{\mathbb{P}}}

\def\R{{\mathbb{R}}}
\def\S{{\mathbb{S}}}

\def\bA{{\bm{A}}}

\def\bC{{\bm{C}}}

\def\bX{{\bm{X}}}

\def\cB{{\cal B}}

\def\cN{{\cal N}}

\def\cS{{\cal S}}
\def\cT{{\cal T}}

\def\cX{{\cal X}}
\def\cY{{\cal Y}}
\def\cZ{{\cal Z}}

\DeclareMathOperator{\Opt}{Opt}

\DeclareMathOperator{\rank}{rank}
\DeclareMathOperator{\Diag}{Diag}

\DeclareMathOperator{\Proj}{Proj}

\newcommand{\ones}{{\bm{1}}}

\newcommand{\x}{{\bm x}}
\newcommand{\z}{{\bm z}}
\newcommand{\y}{{\bm y}}

\newcommand{\g}{{\bm g}}

\newcommand{\w}{{\bm w}}

\newcommand{\q}{{\bm q}}

\renewcommand{\a}{{\bm a}}

\renewcommand{\b}{{\bm b}}

\newcommand{\stt}{\textrm{ s.t.\ }}

\newtheorem{assumption}{Assumption}
\crefname{assumption}{Assumption}{Assumptions}

\newtheorem{lemma}{Lemma}
\newtheorem{proposition}{Proposition}

\theoremstyle{definition}
\newtheorem{definition}{Definition}
\theoremstyle{remark}
\newtheorem{remark}{Remark}

\usepackage{authblk}

\begin{document}

\title{Parameter-Free Non-Ergodic Extragradient Algorithms for Solving Monotone Variational Inequalities\thanks{This research was supported in part by AFOSR [Grant FA9550-22-1-0365].}}

\author[1]{Lingqing Shen}
\author[1]{Fatma K{\i}l{\i}n\c{c}-Karzan}
\affil[1]{Tepper School of Business, Carnegie Mellon University}

\date{\today}

\maketitle

\begin{abstract}
Monotone variational inequalities (VIs) provide a unifying framework for convex minimization, equilibrium computation, and convex-concave saddle-point problems.  Extragradient-type methods are among the most effective first-order algorithms for such problems, but their performance hinges critically on stepsize selection. While most existing theory focuses on ergodic averages of the iterates, practical performance is often driven by the significantly stronger behavior of the last iterate. Moreover, available last-iterate guarantees typically rely on fixed stepsizes chosen using problem-specific global smoothness information, which is often difficult to estimate accurately and may not even be applicable. In this paper, we develop parameter-free extragradient methods with non-asymptotic last-iterate guarantees for constrained monotone VIs. 
For globally Lipschitz operators, our algorithm achieves an $o(1/\sqrt{T})$ last-iterate rate. We then extend the framework to locally Lipschitz operators via backtracking line search and obtain the same rate while preserving parameter-freeness, 
thereby making parameter-free last-iterate methods applicable to important problem classes for which global smoothness is unrealistic. 
Our numerical experiments on bilinear matrix games, LASSO, minimax group fairness, and state-of-the-art maximum entropy sampling relaxations demonstrate wide applicability of our results as well as strong last-iterate performance and significant improvements over existing methods.
\end{abstract}

\section{Introduction}

Variational inequalities {(VIs) with monotone operators} provide a unifying framework for a broad range of problems in optimization, including convex minimization, fixed-point problems, Nash equilibria, and convex-concave saddle-point problems. Among these applications, saddle-point problems have become particularly prominent in modern machine learning, arising in settings such as generative adversarial networks (GANs), robust reinforcement learning, and adversarial learning models.

A central challenge in first-order methods for monotone variational inequalities is stepsize selection. Classical methods typically rely on problem-dependent quantities such as Lipschitz constant of the operator or domain diameter in order to choose a valid stepsize. In practice, however, such quantities are often unavailable or difficult to estimate sharply, and relying on their conservative estimates can lead to excessively small stepsizes and slow convergence. Moreover, the local curvature of the operator may vary substantially across the domain, and so global Lipschitz constants often fail to reflect the behavior of the problem near a solution. As a result, there is now growing interest on \emph{parameter-free} or adaptive methods. 

At the same time, most available convergence rate guarantees are on  \emph{ergodic averages} of the iterates, whereas the practical performance of the \emph{non-ergodic (last-iterate)} algorithms is remarkably better (see \citep{zhu_new_2022,luo_adaptive_2025}). Also, last-iterate convergence is particularly important in applications where averaging may destroy desirable structural properties, such as sparsity or low rank. While non-asymptotic last-iterate guarantees have recently become available for certain classical methods under known smoothness assumptions, analogous guarantees for parameter-free methods remain largely unavailable.

In this paper, we address this gap by developing parameter-free extragradient-type algorithms that do not require prior knowledge of the Lipschitz constant, establishing their non-asymptotic last-iterate convergence rate of $o(1/\sqrt{T})$ for monotone VIs with either globally or  locally Lipschitz operators, and illustrating their practical efficacy on a variety of problem classes.

\subsection{Related work}

First-order methods for monotone VIs and convex--concave saddle-point problems dates back to \texttt{Gradient Descent-Ascent (GDA)}, which does not guarantee last-iterate convergence even in simple bilinear games because the iterates may cycle or spiral away from the saddle point \citep{arrow_constraint_1961}.  In a major advance, \cite{nemirovski_problem_1983} showed that convergence can be achieved by averaging the iterates of \texttt{Saddle-Point Mirror Descent (SP-MD)} (a generalization of \texttt{GDA} designed to accommodate non-Euclidean geometries through Bregman divergences), yielding the optimal $O(1/\sqrt{T})$ ergodic rate  for SP problems with general Lipschitz SP functions. Another foundational development is the \texttt{Extragradient (EG)} algorithm proposed by \cite{korpelevich_extragradient_1976}, who proved asymptotic convergence of the actual iterates of the algorithm. Unlike standard descent methods, \texttt{EG} performs a \emph{look-ahead step} that better anticipates the operator's behavior, mitigating the cycling phenomena observed in \texttt{GDA} and enabling last-iterate convergence in practice. \cite{nemirovski_prox-method_2004} later generalized  \texttt{EG} to the \texttt{Mirror-Prox (MP)} algorithm in the Bregman setting and established an $O(1/T)$ ergodic convergence rate for  monotone VIs with Lipschitz continuous operators.

The standard  \texttt{EG}  algorithm assumes a Lipschitz continuous operator and employs a fixed stepsize determined by the reciprocal of the Lipschitz constant. As a result, the practical implementation of  \texttt{EG}  algorithm faces the previously mentioned parameter dependent stepsize selection challenges. 
Several adaptive variants of  \texttt{EG}  have been proposed to address these challenges. Early approaches by \cite{khobotov_modification_1987} and \cite{marcotte_application_1991} introduced adaptive rules based on backtracking line search that enforce Armijo-type conditions. Similarly, \cite{iusem_iterative_1994} proposed an adaptive scheme based on a bracketing procedure. By interpretting  \texttt{EG}  algorithm as a prediction-correction framework, \cite{he_improvements_2002} introduced separate stepsizes for the prediction and correction steps. Nevertheless, all these adaptive schemes for  \texttt{EG} algorithm rely on Fej\'er-type monotonicity arguments similar to those used in the original analysis, and thus establish \emph{only asymptotic convergence} of the generated iterates. Moreover, they  typically require iterative subprocedures at each iteration to determine an admissible stepsize, increasing the per-iteration computational cost and the overall complexity of the algorithm. 

Although the ergodic convergence theory of \texttt{EG} algorithm is now well understood, the results on non-asymptotic guarantees for the \emph{actual iterates} are rather scarce and focus on VIs with globally Lipschitz operators. A sequence of recent works has established last-iterate guarantees for the standard \texttt{EG}  method under constant stepsizes chosen from known smoothness parameters.
In the unconstrained setting,  \cite{golowich_last_2020} presented the first such result,  proving 
an  $O(1/\sqrt{T})$ last-iterate rate under the additional assumption that the Jacobian of the operator is Lipschitz continuous, and also established matching lower bounds.  This additional assumption was later removed by \cite{gorbunov_exgradient_2022}, and \cite{cai_tight_2022} extended the same last-iterate rate to constrained monotone VIs. More recently, \cite{antonakopoulos_extra-gradient_2024} proved a strictly faster rate of $o(1/\sqrt{T})$ last-iterate rate for \texttt{EG} algorithm, 
and \cite{upadhyaya_lyapunov_2026} obtained comparable rate guarantees through a Lyapunov-based analysis. 
With the exception of \cite{antonakopoulos_extra-gradient_2024},  however, all of these algorithms rely on stepsize selection methods that depend on prior knowledge of problem-specific global Lipschitz constants.
\cref{tab:last-iterate-literature} summarizes existing constant-stepsize \texttt{EG} variants with  last-iterate convergence rates together with the corresponding convergence metrics used; see \cref{sec:metric} for the definitions of these metrics.  

\begin{table}[htbp]
\centering
\begin{tabular}{lclr}
\toprule
\textbf{Reference} & \textbf{Domain} & \textbf{Convergence Metric} & \textbf{Rate} \\
\midrule
\cite{golowich_last_2020} & unconstrained & \texttt{gap}, \texttt{nat res} & $O(1/\sqrt{T})$ \\
\cite{gorbunov_exgradient_2022} & unconstrained & \texttt{gap}, \texttt{nat res} & $O(1/\sqrt{T})$ \\
\cite{cai_tight_2022} & constrained &\texttt{gap}, \texttt{nat res}, \texttt{tan res} & $O(1/\sqrt{T})$ \\
\cite{antonakopoulos_extra-gradient_2024} & constrained & \texttt{gap} & $o(1/\sqrt{T})$ \\
\cite{upadhyaya_lyapunov_2026} & constrained & \texttt{gap}, \texttt{nat res} & $o(1/\sqrt{T})$ \\
\bottomrule
\end{tabular}
\caption{Summary of last-iterate convergence rates of \texttt{EG} variants with constant stepsizes (\texttt{gap} = restricted gap function; \texttt{nat res} = natural residual; \texttt{tan res} = tangent residual) }
\label{tab:last-iterate-literature}
\end{table}

A separate line of work has focused on \emph{parameter-free or adaptive methods} that achieve non-asymptotic convergence guarantees without requiring explicit knowledge of Lipschitz constant for stepsize selection. Early contributions include
\texttt{Universal MP} \citep{bach_universal_2019}, which adapts automatically to both smooth and nonsmooth structures but requires bounded domains and operators, and
\texttt{Adaptive MP} \citep{antonakopoulos_adaptive_2019}, which uses a stepsize rule based on operator variation to handle singularities near the domain boundary. Building on these, 
\texttt{AdaProx} \citep{antonakopoulos_adaptive_2021} introduces a universal method for singular operators, providing ergodic rate interpolation for both Lipschitz and bounded operators, while also achieving asymptotic convergence for the actual iterates. 
Relatedly, the past-extragradient  framework improves efficiency by reusing previous operator evaluations; its adaptive counterpart, \texttt{AdaPEG} \citep{ene_adaptive_2022}, achieves ergodic rates for bounded operators across smooth and nonsmooth settings. Another approach that departs  from the \texttt{EG} framework is
\texttt{aGRAAL} \citep{malitsky_golden_2020}, which relies on only a single operator evaluation per iteration and uses a stepsize rule based on only the local Lipschitz constant estimates. 
However, the non-asymptotic convergence rates established for these methods are all ergodic. 
Moving toward non-ergodic guarantees, the \texttt{EG+ algorithm} with adaptive stepsizes \citep{bohm_solving_2023} addresses a broader class of VIs in the unconstrained setting, and provides a rate for the best operator norm of the iterates.
More recently, the \texttt{Adapt EG} \citep{antonakopoulos_extra-gradient_2024} has made significant progress by achieving a last-iterate rate of $o(1/\sqrt{T})$ in terms of the restricted gap function. As summarized in \cref{tab:adaptive-literature}, 
while parameter-free methods now enjoy strong ergodic guarantees under a variety of assumptions, non-asymptotic last-iterate guarantees are still very limited, with the only exception being \texttt{Adapt EG} which handles only globally Lipschitz operators and uses monotonically decreasing stepsizes.

Thus, a theoretical gap remains  between adaptive (parameter-free) non-monotone stepsize design and non-asymptotic last-iterate convergence theory. This gap is especially pronounced in the case of VIs with locally Lipschitz continuous operators: 
while existing parameter-free methods for this setting provide ergodic guarantees, to the best of our knowledge, there is currently no parameter-free extragradient framework with non-asymptotic last-iterate guarantees under such weak smoothness assumptions, thereby significantly limiting the theoretical and practical applicability of last-iterate theory in settings where global smoothness is unrealistic.

\begin{table}[htbp]
\centering
\resizebox{\textwidth}{!}{%
\begin{tabular}{lcccccc} 
\toprule
\textbf{Algorithm} & \textbf{Domain} & \textbf{Operator} & \textbf{Rate} & \textbf{Metric} & \textbf{Ergodicity} \\
\midrule
Universal MP [\citenum{bach_universal_2019}] & bounded & Lipschitz & \phantom{\textsuperscript{*}}$O(1/T)$\textsuperscript{*} & \texttt{gap} & ergodic \\ 
Adaptive MP [\citenum{antonakopoulos_adaptive_2019}] & constrained & Lipschitz & $O(1/T)$ & \texttt{gap} & ergodic \\ 
AdaProx [\citenum{antonakopoulos_adaptive_2021}] & constrained & Lipschitz & $O(1/T)$ & \texttt{gap} & ergodic \\ 
AdaPEG [\citenum{ene_adaptive_2022}] & constrained & Lipschitz & $O(1/T)$ & \texttt{gap} & ergodic \\ 
aGRAAL [\citenum{malitsky_golden_2020}] & constrained & local Lipschitz & \phantom{\textsuperscript{*}}$O(1/T)$\textsuperscript{*} & \texttt{gap} & ergodic \\ 
Adaptive EG+ [\citenum{bohm_solving_2023}] & unconstrained & Lipschitz & $O(1/\sqrt{T})$ & \texttt{nat res} & best-iterate \\ 
Adapt EG [\citenum{antonakopoulos_extra-gradient_2024}] & constrained & Lipschitz & $o(1/\sqrt{T})$ & \texttt{gap} & last-iterate \\ 
\cref{alg:extragradient-adaptive} & constrained & Lipschitz & $o(1/\sqrt{T})$ & \texttt{tan res} & last-iterate \\ 
\cref{alg:extragradient-backtracking} & constrained & local Lipschitz & $o(1/\sqrt{T})$ & \texttt{tan res} & last-iterate \\ 
\cref{alg:extragradient-backtracking-old} & constrained & local Lipschitz & $o(1/\sqrt{T})$ & \texttt{tan res} & last-iterate \\
\bottomrule
\end{tabular}
}
\caption{Summary of parameter-free algorithms with non-asymptotic convergence rates \\({\texttt{gap} = restricted gap function; \texttt{nat res} = natural residual}; \texttt{tan res} = tangent residual; * indicates rates that the corresponding algorithm additionally assumes a bounded operator)}
\label{tab:adaptive-literature}
\end{table}

\subsection{Contribution and outline}

In this paper, we address this gap by developing parameter-free extragradient-type algorithms with non-asymptotic last-iterate guarantees for monotone VIs with both globally or locally Lipschitz operators. Unlike existing methods that rely on monotonically decreasing stepsizes and global Lipschitz assumptions, our algorithms utilizes non-monotone stepsize schemes that are better at adapting to local geometry and recovering from poor initializations. 
As a result, our algorithms significantly advance both the theory and practice of solving  monotone VIs.

Our main contributions are as follows:

\begin{enumerate}[(i)]
    \item   We introduce the \emph{extragradient residual}, an optimality metric naturally aligned with extragradient updates, and show that it upper bounds the  tangent residual from the literature. This residual provides a tractable basis for establishing non-asymptotic last-iterate guarantees for adaptive extragradient-type methods.

    \item  For monotone VIs with globally Lipschitz operators, we propose \texttt{PF-NE-EG} (\cref{alg:extragradient-adaptive}), an extragradient method with an adaptive stepsize rule that eliminates the need for problem-specific parameters such as Lipschitz constants and domain diameters while employing a simple stepsize update rule with minimal computational overhead. We establish a non-asymptotic last-iterate convergence rate of $o(1/\sqrt{T})$ in extragradient residual. To the best of our knowledge, this is the first such guarantee for a parameter-free method in a metric bounding the tangent residual and demonstrating strong practical performance.

    \item  We extend our framework to locally Lipschitz monotone operators through \cref{alg:extragradient-backtracking,alg:extragradient-backtracking-old}, which adopt  backtracking line searches for stepsize selection. We prove that these backtracking line searches are well-defined and incur only finitely many failed reductions overall. These methods remain parameter-free and achieve the same non-asymptotic last-iterate rate of $o(1/\sqrt{T})$ under the much weaker locally Lipschitz operator assumption. To the best of our knowledge, this is the first last-iterate convergence guarantee for a parameter-free method in this nonsmooth setting.
    {Thus, we significantly extend the applicability of last-iterate guarantees to new problem classes;} recall that in the nonsmooth setting the only other parameter-free {algorithm is \texttt{aGRAAL}, which admits only an ergodic rate}; see \citep{malitsky_proximal_2018}.

    \item  We evaluate the proposed methods on four representative classes of problems: bilinear matrix games, SP formulation of LASSO, minimax group fairness, and SP formulation of a state-of-the-art maximum entropy sampling problem (MESP) relaxation. 
    {While the monotone VIs associated with bilinear matrix games and LASSO have globally Lipschitz continuous operators, minimax group fairness and MESP come with only locally Lipschitz operators.}
    Across all instances, the proposed algorithms exhibit strong last-iterate performance and compare quite favorably with existing baselines; {the improvements becoming exceptionally pronounced in the cases of VIs with locally Lipschitz operators.} 
    Notably, in the MESP setting, our approach provides the first convex optimization algorithm to solve the g-scaled linx and double-scaled linx relaxations as well as the first approach with theoretical convergence rate guarantees for mixing-based MESP bounds (see \cite{chen_mixing_2021,chen_computing_2023,chen_generalized_2024,ponte_admm_2025,shen_majorization_2026}). 
\end{enumerate}

The remainder of the paper is organized as follows. In \cref{sec:preliminaries}, we introduce notation, convergence metrics, and preliminary results. In \cref{sec:alg}, we present our algorithm, \texttt{PF-NE-EG}. For monotone VIs with globally Lipschitz operators,  we establish the last-iterate convergence guarantees of \texttt{PF-NE-EG} in \cref{sec:alg-smooth}. In \cref{sec:alg-backtracking-new}, we develop the backtracking variant with non-monotone stepsizes for locally Lipschitz operators and prove its convergence properties. We report our numerical study in \cref{sec:numerical}. We present another variant of \texttt{PF-NE-EG} with backtracking based monotone stepsizes in \cref{sec:appendix}. 
\subsection{Notation}

Given a positive integer $d$, let $[d]\coloneqq\set{1,2,\dots,d}$. Let $\ones\coloneqq(1,\dots,1)\in\R^d$ be the vector of all ones. 
Let $\Delta_d\coloneqq\set{\x\in\R^d_+:\ones^\top\x=1}$ be the standard simplex in $\R^d$. For $\x\in\R^d$, let $x_i$ be the $i$\textsuperscript{th} component of $\x$. Let $\|\x\|_2,~ \|\x\|_1$ and $\|\x\|_\infty$ denote the Euclidean, $\ell_1$ and $\ell_\infty$ norm of $\x$, respectively. {With slight abuse of notation, we denote $\exp(\x)\coloneq (\exp(x_1), \dots, \exp(x_d))$ to be the vector obtained by applying the exponential component-wise.}
Let $\S^d$ be the vector space of $d\times d$ real symmetric matrices and $\S^d_+$ the cone of positive semidefinite matrices. 
{For $\x\in\R^d$, we represent the diagonal matrix with diagonal entries $\x$ by $\Diag(\x)\in\S^d$. For $\bX\in\S^d$, we denote the logarithm of its determinant by $\log\det(\bX)$.}
Given a convex function $f:\R^d\to(-\infty,+\infty]$ and a vector $\x\in\R^d$, let $\partial f(\x)$ be the set of subdifferential of $f$ at the point $\x$, and $\nabla f(\x)$ a subgradient of $f$ at $\x$. Given a bivariate function $(\x,\y)\mapsto\phi(\x,\y)$ that is convex in $\x$ and concave in $\y$, let $\nabla_\x\phi(\x,\y)$ be the subgradient of $\phi$ w.r.t.\ $\x$ with $\y$ fixed, and $\nabla_\x\phi(\x,\y)$ be the supergradient of $\phi$ w.r.t.\ $\y$ with $\x$ fixed. {Let $\Proj_{\mathcal{Z}}(\mathbf{x})$ denote the orthogonal projection of $\x \in \R^d$ onto $\cZ\subset\R^d$. By convention, we define $\frac00=0$ whenever $0$ appears in the denominator.}

\section{Preliminaries}
\label{sec:preliminaries}

In this section, we formally define the problem as well as notation that will be used throughout the paper. We focus on monotone variational inequalities, with a particular emphasis on the application to convex-concave saddle-point problems.

\subsection{Variational inequalities}

Let $\cZ\subset\R^d$ be a nonempty, closed, and convex set, and let $F: \cZ \to \R^d$ be a continuous operator. A variational inequality (VI) problem associated with $\cZ$ and $F$ is to find a vector $\z_* \in \cZ$ such that
\begin{align}\label{eq:vi}
  \langle F(\z_*), \z - \z_* \rangle \geq 0, \quad \forall \z \in \cZ. \tag{VI}
\end{align}
{Such a solution $\z_*$ is called a \emph{strong solution} of the VI.} 
We are interested in the case where $F$ is \emph{monotone}, i.e.,
\begin{align}\label{eq:monotone}
  \langle F(\z) - F(\w), \z - \w \rangle \geq 0, \quad \forall \z, \w \in \cZ.
\end{align}
{The monotonicity of the operator corresponds to the convexity of the underlying problem, and it} is a standard assumption in the study of VIs. 

An important instance of \eqref{eq:vi} that motivates our work is the \emph{convex-concave saddle point problem}
\begin{align}\label{eq:spp}
  \min_{\x \in \cX} \max_{\y \in \cY} \phi(\x, \y),  \tag{SP}
\end{align}
where $\cX,\ \cY$ are closed convex sets, and $\phi: \cX \times \cY \to \R$ is convex in $\x$ for every $\y$ and concave in $\y$ for every $\x$. A point $(\x^*, \y^*) \in \cX\times\cY$ is a \emph{saddle point} of $\phi$ if it satisfies  
\[ \phi(\x^*, \y) \leq \phi(\x^*, \y^*) \leq \phi(\x, \y^*), \quad \forall \x \in \cX,\; \forall \y \in \cY. \] 
Problem \eqref{eq:spp} can be equivalently formulated as a variational inequality of the form  \eqref{eq:vi} by defining $\z \coloneq (\x, \y) \in \cZ \coloneq \cX \times \cY$ and the operator  $F(\z) \coloneq (\nabla_\x \phi(\x, \y), -\nabla_\y \phi(\x, \y))$. Then, by the convex-concave structure of $\phi$, we immediately deduce that $F(\z)$ is a monotone operator satisfying \eqref{eq:monotone}. Moreover, a point $\z_*=(\x_*,\y_*)$ is a saddle point of $\phi$ if and only if it solves \eqref{eq:vi}, i.e., the saddle points of $\phi$ correspond exactly to the solutions of the associated variational inequality.

\subsection{Convergence metrics}\label{sec:metric}

The most common convergence metric for \eqref{eq:vi} is the \emph{restricted gap function}, which is particularly useful for establishing the convergence of ergodic iterates.
\begin{definition}[Restricted gap function]
  For {a given solution $\z\in\cZ$ and} any compact set $\cB\subset\cZ$, the restricted gap function is defined as
  \begin{align*}
    G_{\cB}(\z) \coloneqq \sup_{\w \in {\cB}} \langle F(\w), \z - \w \rangle. 
  \end{align*}
\end{definition}
In the context of \eqref{eq:spp}, this corresponds to the classical convergence metric, \emph{the restricted saddle point gap}, which provides a certificate of optimality. Given \eqref{eq:spp}, there are a pair of associated primal and dual problems: 
\begin{align*}
  \Opt(P) \coloneqq \min_{\x\in\cX}\overline\phi(\x) &= \min_{\x\in\cX}\max_{\y\in\cY}\phi(\x,\y), \\
  \Opt(D) \coloneqq \max_{\y\in\cY}\underline{\phi}(\y) &= \max_{\y\in\cY}\min_{\x\in\cX}\phi(\x,\y). 
\end{align*}
Under strong duality, $\Opt(P)=\Opt(D)$. If the restriction set $\cB=\cX_B\times\cY_B\subset\cX\times\cY$ is sufficiently large to contain the optimal saddle point, then the restricted saddle point gap can be decomposed into the sum of the restricted primal and dual optimality gaps: 
\begin{align*}
  G_{\cB}(\x, \y)
  &= \max_{\y' \in \cY_B} \phi(\x, \y') - \min_{\x' \in \cX_B} \phi(\x', \y) \\
  &= \max_{\y' \in \cY_B} \phi(\x, \y') - \Opt(P) + \Opt(D) - \min_{\x' \in \cX_B} \phi(\x', \y). 
\end{align*}

The restriction on a compact set is essential for problems defined on unbounded domains, because the gap can go to infinity even in the simple case of bilinear matrix games. 
While the gap function is useful for studying theoretical convergence guarantees, it can be difficult (or not computationally efficient) to evaluate in practice due to its reliance on solving optimization problems. 

The \emph{natural residual} is another standard convergence metric for \eqref{eq:vi} that addresses the computational limitations of the gap function. 
\begin{definition}[Natural residual]
  For {a given solution $\z\in\cZ$ and} a given stepsize $\eta > 0$, the natural residual is defined as
  \begin{align*}
    R_{\eta}(\z) \coloneq \tfrac{1}{\eta} \| \z - \Proj_{\cZ}(\z-\eta F(\z)) \|_2. 
  \end{align*}
\end{definition}
As a convergence metric, the natural residual satisfies $R_{\eta}(\z) = 0$ if and only if $\z$ is a solution to \eqref{eq:vi}. 
Unlike the gap function which averages out oscillations for monotone operators, the natural residual is better suited for measuring the actual iterates. Moreover, it takes little computational effort to evaluate $R_{\eta_t}(\z_t)$ since its computation does not require solving optimization problems. 
Furthermore, it remains well-defined for unbounded domains and better captures the local properties, and it provides an upper bound of the restricted gap (see e.g., \cite[Lemma 2]{cai_tight_2022}) up to a factor of the restriction set diameter.

The \emph{tangent residual}, recently introduced by \cite{cai_tight_2022}, proves to be particularly useful for analyzing the last-iterate convergence of extragradient-type algorithms (e.g., see \cite{tran-dinh_sublinear_2023}). 
\begin{definition}[Tangent residual]\label{def:tangent-residual}
  {For a given solution $\z\in\cZ$,} the tangent residual is defined as
  \begin{align*}
    T(\z) \coloneq \|F(\z) + \Proj_{\cN_\cZ(\z)}(-F(\z))\|_2,   
  \end{align*}
  where $\cN_{\cZ}(\z)\coloneq\set{\bm\xi\in\R^d:\langle\bm\xi,\w-\z\rangle\leq0, \forall\w\in\cZ}$ is the normal cone of $\cZ$ at $\z$. 
\end{definition}
Note that {as $\cZ$ is a nonempty closed convex set, we have that $\cN_{\cZ}(\z)$ is a closed convex cone for every $\z\in\cZ$. Then, by its definition, the tangent residual satisfies $T(\z)=\min_{\bm\xi}\{\|F(\z)+\bm\xi\|_2:\, \bm\xi\in \cN_{\cZ}(\z)\}$. 
Thus, if} $\z$ is in the interior of $\cZ$, then $\cN_{\cZ}(\z)=\set{0}$ and $T(\z)=\|F(\z)\|_2$ is reduced to the operator norm, which is a common convergence metric for unconstrained problems. Let $\cT_\cZ(\z)\coloneq\set{\bm\xi\in\cS:\langle\bm\xi,\w\rangle\leq0, \forall\w\in\cZ}$ be the tangent cone of $\cZ$ at $\z$. Recall that $\cT_{\cZ}(\z)$  is the polar of $\cN_\cZ(\z)$, and by Moreau decomposition \citep{moreau_decomposition_1962,combettes_moreau_2013}, $\w=\Proj_{\cN_\cZ(\z)}(\w) + \Proj_{\cT_\cZ(\z)}(\w)$ for all $\w\in\cZ$. Thus, $T(\z)=\|\Proj_{\cT_\cZ(\z)}(-F(\z))\|_2$. This means that, if $\z$ is on the boundary, then $T(\z)$ measures the projection of $-F(\z)$ onto the tangent cone $\cT_\cZ(\z)$, which can be viewed intuitively as the steepest descent in a feasible direction. 

As a convergence metric, the tangent residual has the basic property that $\z_*$ is an optimal solution to \eqref{eq:vi} if and only if $T(\z_*)=0$. This is because \eqref{eq:vi} can be equivalently written as $-F(\z_*)\in\cN_\cZ(\z_*)$ by definition of $\cN_\cZ(\cdot)$. Furthermore, the tangent residual is an upper bound for the natural residual (see \cite[Lemma 1]{cai_tight_2022}). It also has the advantage of not relying on any parameters such as compact set $\cB$ or stepsize $\eta$ in its definition. Although computationally it is not as tractable as the natural residual, it is especially useful for the analysis of extragradient-type algorithms, in which case it admits a nice monotonicity property. {When the underlying VI originates from constrained optimization, i.e.,} ($F=\nabla f$), {$T(\z)$} directly captures the KKT stationarity error of the Lagrangian.

Finally, we present two basic facts that will be used throughout our analysis. 

\begin{lemma}[{3-point identity}]\label{lem:3-point-identity}
  For any three points $\a,\b,\bm c\in\cZ$, 
  \begin{align*}
    \|\a-\bm c\|_2^2 + \|\a-\b\|_2^2 - \|\b-\bm c\|_2^2 = {2}\langle\b-\a, \bm c-\a\rangle. 
  \end{align*}
\end{lemma}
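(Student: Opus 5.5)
The identity is purely algebraic: it follows from expanding squared Euclidean norms through the inner product. Membership of $\a,\b,\bm c$ in $\cZ$ plays no role, and the same equality holds for arbitrary vectors in $\R^d$.

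The plan is to translate everything relative to the common point $\a$. Set $\bm u\coloneq\b-\a$ and $\bm v\coloneq\bm c-\a$. Then $\|\a-\bm c\|_2^2=\|\bm v\|_2^2$ and $\|\a-\b\|_2^2=\|\bm u\|_2^2$. Moreover, since $\b-\bm c=\bm u-\bm v$, we have
\begin{align*}
  \|\b-\bm c\|_2^2=\|\bm u\|_2^2-2\langle\bm u,\bm v\rangle+\|\bm v\|_2^2,
\end{align*}
using bilinearity and symmetry of the inner product.

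Substituting these three expressions into the left-hand side, the terms $\|\bm u\|_2^2$ and $\|\bm v\|_2^2$ cancel. What remains is $2\langle\bm u,\bm v\rangle=2\langle\b-\a,\bm c-\a\rangle$, which is the right-hand side.

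There is no real obstacle here. The only point requiring care is the sign bookkeeping in the expansion of $\|\bm u-\bm v\|_2^2$, and checking that the inner product on the right pairs $\b-\a$ with $\bm c-\a$ as stated.
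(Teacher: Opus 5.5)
Your proof is correct: the substitution $\bm u=\b-\a$, $\bm v=\bm c-\a$ together with the expansion of $\|\bm u-\bm v\|_2^2$ yields exactly the stated identity, and you are right that membership in $\cZ$ plays no role. The paper states this lemma as a basic fact without proof, and your direct expansion is the standard verification it implicitly relies on.
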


\begin{lemma}[{Olivier's Theorem \citep[p. 124]{knopp_theory_1990}}]\label{lem:olivier}
  Let $\{a_n\}_{n\in\N}$ be a non-increasing sequence of positive numbers such that $\sum_{n=1}^{\infty} a_n < +\infty $ for all $n\in\N$.
  Then $a_n = o\left(1/n\right)$.
\end{lemma}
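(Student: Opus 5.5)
The statement is Olivier's theorem. Since the sequence is non-increasing and summable, it suffices to show that $n a_n \to 0$. The plan is to compare $a_n$ with a block of earlier terms of the series and invoke the Cauchy criterion.

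First, fix $\varepsilon>0$. Because $\sum_{n} a_n<+\infty$, the Cauchy criterion gives an index $N$ such that $\sum_{k=m+1}^{n} a_k<\varepsilon$ for all $n>m\geq N$.

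Second, for $n\geq 2N$ we take $m=\lfloor n/2\rfloor\geq N$. Monotonicity gives $a_k\geq a_n$ for every $k\leq n$, so
\begin{align*}
  (n-m)\,a_n \leq \sum_{k=m+1}^{n} a_k < \varepsilon .
\end{align*}
Since $n-m\geq n/2$, this yields $n a_n < 2\varepsilon$ for all $n\geq 2N$. As $\varepsilon>0$ was arbitrary, $n a_n\to 0$, which is exactly $a_n=o(1/n)$. Positivity of the $a_n$ is only used to ensure the tail sums are nonnegative, so that the Cauchy bound applies to the full block $\sum_{k=m+1}^{n} a_k$.

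The only delicate point is choosing the block so that its length is proportional to $n$ while its left endpoint still lies in the Cauchy tail. Taking $m=\lfloor n/2\rfloor$ achieves both, and nothing else in the argument should cause difficulty.
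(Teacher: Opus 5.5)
Your proof is correct. The paper does not prove this lemma; it only cites Olivier's theorem from Knopp. Your block-comparison argument is the standard proof of that classical result: you bound $(n-m)a_n$ by the Cauchy block $\sum_{k=m+1}^{n}a_k$ and take $m=\lfloor n/2\rfloor$, so that $n-m=\lceil n/2\rceil\geq n/2$.

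One correction to your closing remark. The Cauchy criterion controls $\bigl|\sum_{k=m+1}^{n}a_k\bigr|$ regardless of signs, so positivity is not needed there. Nonnegativity is actually used in two places:
\begin{enumerate}[(i)]
  \item passing from $(n-m)a_n<\varepsilon$ to $na_n\leq 2(n-m)a_n<2\varepsilon$;
  \item concluding $|na_n|\to0$ from an upper bound alone.
\end{enumerate}
Nonnegativity also need not be assumed, since a non-increasing sequence with convergent series decreases to $0$.

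This matters for the paper. It applies the lemma to $\|F(\z_t)+\bm\xi_t\|_2^2$, which is non-increasing by \cref{lem:residual-nonincreasing} but only nonnegative, not strictly positive. Your argument covers that case unchanged.
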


\section{Parameter-free non-ergodic variational inequality algorithm}
\label{sec:alg}

In this section, we develop and analyze two parameter-free extragradient algorithms for solving \eqref{eq:vi}. 
Our proposed algorithms are built upon the standard extragradient algorithm \citep{korpelevich_extragradient_1976} given by the following update rule:
\begin{align}
  \w_t &= \Proj_{\cZ}(\z_t-\eta_tF(\z_t)), \label{eq:extragradient-1} \\
  \z_{t+1} &= \Proj_{\cZ}(\z_t-\eta_tF(\w_t)). \label{eq:extragradient-2} 
\end{align}
Our algorithms deviate from the original form by {our design of novel parameter-free} stepsize schemes that {result in both theoretical convergence guarantees on the last iterate and also achieve  practical efficiency.} Instead of using a constant stepsize $\eta_t=\eta$ that relies on the reciprocal of the global Lipschitz constant, our stepsize is based on the estimate of local Lipschitz constant around each iterate. Unlike AdaGrad-style stepsizes, our scheme can be easily extended to backtracking line search, and {hence can} naturally exploit the local {Lipschitz continuity} structure of $F$ without requiring global Lipschitz continuity. In addition, {our analysis can accommodate} for non-monotonically decreasing stepsizes and thus better adjusts to the local curvature.

To facilitate the discussion, we define the local Lipschitz estimates at iteration $t$ as follows:
\begin{align}
  L_t &\coloneqq \frac{\|F(\w_t)-F(\z_t)\|_2}{\|\w_t-\z_t\|_2}, \label{eq:local-Lipschitz-estimate-1}\\
  \hat L_t &\coloneqq \begin{cases}
    \frac{\|F(\w_t)-F(\z_{t+1})\|_2}{\|\w_t-\z_{t+1}\|_2}, & \text{if } \w_t\neq\z_{t+1} \\
    0, & \text{if }\w_t=\z_{t+1}. 
  \end{cases}\label{eq:local-Lipschitz-estimate-2}
\end{align}

\begin{remark}\label{rem:L-def}
  If $\w_t=\z_t$, by \eqref{eq:extragradient-1} and the property of projection, $\langle-\eta_t F(\z_t),\z-\z_t\rangle\leq0$ for any $\z\in\cZ$. By monotonicity of $F$, this further shows that $\z_t$ is an optimal solution of \eqref{eq:vi}. Therefore, whenever $\w_t=\z_t$, we may stop the algorithm and conclude optimality. In the analysis to follow, we will focus on an iteration $t$ before the termination of the algorithm, ensuring $L_t$ is well-defined. 
\end{remark}

\begin{remark}
  The definition of $\hat L_t$ ensures $\|F(\w_t)-F(\z_{t+1})\|_2 = \hat L_t \|\w_t-\z_{t+1}\|_2$.
\end{remark}

\begin{remark}
  Under \cref{assum:smooth}, we have $L\geq\max\set{L_t,\hat L_t}$ for all $t$. 
\end{remark}

Before proceeding to the analysis, we introduce {our new convergence metric.
\begin{definition}[Extragradient residual]
  For {a given solution $\z\in\cZ$ and} a given stepsize $\eta > 0$, the extragradient residual is defined to be
  \[ \|F(\z_{t+1})+\bm\xi_{t+1}\|_2, \] 
  where
\begin{align}\label{eq:def:ex-residual}
  \bm\xi_{t+1} \coloneqq -\frac{1}{\eta_{t}}[\z_{t+1} - (\z_{t} - \eta_{t}F(\w_{t}))]. 
\end{align}
\end{definition}
}
The extragradient residual $\|F(\z_{t+1})+\bm\xi_{t+1}\|_2$ vanishes if and only if $\z_{t+1}$ is a solution of \eqref{eq:vi}. 
Intuitively, $F(\z_{t+1})$ captures the local behavior of the operator, while $\bm\xi_{t+1}$ represents the projection residual of the extragradient step \eqref{eq:extragradient-2} and captures the displacement needed to maintain feasibility within the constraint set $\cZ$.
Although this residual is limited to extragradient-type algorithms and last-iterate analysis, it is a stronger convergence metric than the tangent or natural residuals, defined in \cref{sec:metric}, commonly used in the literature (see \cref{lem:eg-res-geq-tangent-res}). Moreover, whenever extragradient residual is applicable, its computational overhead is rather small. {In our theoretical results, we will use the extragradient residual $\|F(\z_{t})+\bm\xi_{t}\|_2$ as our primary convergence metric.}

\begin{lemma}\label{lem:eg-res-geq-tangent-res}
  Let $\z_{t+1}$ be generated by the extragradient step \eqref{eq:extragradient-1}--\eqref{eq:extragradient-2}. Then the extragradient residual satisfies:
  \begin{align*}
  \|F(\z_{t+1}) + \bm\xi_{t+1}\|_2 \geq T(\z_{t+1}).
  \end{align*}
\end{lemma}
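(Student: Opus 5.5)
The plan is to show that $\bm\xi_{t+1}$ lies in the normal cone $\cN_\cZ(\z_{t+1})$. Once this is established, the claim follows from the variational characterization of the tangent residual recorded after \cref{def:tangent-residual}:
\[
T(\z_{t+1})=\min_{\bm\xi\in\cN_\cZ(\z_{t+1})}\|F(\z_{t+1})+\bm\xi\|_2 .
\]
Since $\bm\xi_{t+1}$ is one feasible candidate in this minimization, the minimum is at most $\|F(\z_{t+1})+\bm\xi_{t+1}\|_2$, which is exactly the inequality in \cref{lem:eg-res-geq-tangent-res}.

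The key step is the membership $\bm\xi_{t+1}\in\cN_\cZ(\z_{t+1})$, and it comes from the projection characterization applied to \eqref{eq:extragradient-2}. Set $\bm u\coloneqq \z_t-\eta_t F(\w_t)$, so that $\z_{t+1}=\Proj_\cZ(\bm u)$. Because $\cZ$ is nonempty, closed and convex, the obtuse-angle property of the projection gives
\[
\langle \bm u-\z_{t+1},\,\w-\z_{t+1}\rangle\le 0 \quad \text{for all } \w\in\cZ .
\]
Equivalently, $\bm u-\z_{t+1}\in\cN_\cZ(\z_{t+1})$. By \eqref{eq:def:ex-residual}, $\bm\xi_{t+1}=\frac{1}{\eta_t}(\bm u-\z_{t+1})$. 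Since $\eta_t>0$ and the normal cone is a cone, the positive scaling $1/\eta_t$ preserves membership, so $\bm\xi_{t+1}\in\cN_\cZ(\z_{t+1})$.

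Finally, note that $\cN_\cZ(\z_{t+1})$ is a nonempty closed convex cone: it contains $0$, and $\z_{t+1}\in\cZ$. The minimum defining $T$ is therefore attained and equals the projection-based definition, as the paper already observed after \cref{def:tangent-residual}. The only potential subtlety is this identification of $T(\z)$ with the minimum distance from $-F(\z)$ to the normal cone, which is the standard projection property already used in the text. I do not expect a genuine obstacle; the argument is a direct combination of the projection inequality and the conic structure of $\cN_\cZ$.
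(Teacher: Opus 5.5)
Your proposal is correct and follows the paper's proof: both obtain $\bm\xi_{t+1}\in\cN_\cZ(\z_{t+1})$ from the projection optimality condition for \eqref{eq:extragradient-2}. Both then conclude because $F(\z_{t+1})+\Proj_{\cN_\cZ(\z_{t+1})}(-F(\z_{t+1}))$ has the smallest norm among all $F(\z_{t+1})+\bm\xi$ with $\bm\xi$ in that cone.
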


\begin{proof}
By the optimality conditions of the projection step in the extragradient update, we have $\bm\xi_{t+1} = -\frac{1}{\eta_{t}}[\z_{t+1} - (\z_{t} - \eta_{t}F(\w_{t}))] \in \cN_{\cZ}(\z_{t+1})$ (see \citep[Example 6.16]{rockafellar_variational_1998}). By \cref{def:tangent-residual}, the tangent residual at $\z_{t+1}$ is 
\begin{align*}
T(\z_{t+1}) 
&= \|F(\z_{t+1}) + \Proj_{\cN_\cZ(\z_{t+1})}(-F(\z_{t+1}))\|_2 \\
&\leq \|F(\z_{t+1}) + \bm\xi_{t+1}\|_2, 
\end{align*}
where the inequality follows from the definition of projection and $\bm\xi_{t+1} \in \cN_{\cZ}(\z_{t+1})$. 
\end{proof}

With these definitions in hand, we first derive some lemmas useful for the analysis of the extragradient algorithm updates \eqref{eq:extragradient-1}--\eqref{eq:extragradient-2}. This per-iteration analysis is independent of the stepsize selection scheme, as long as the stepsize $\eta_t$ satisfies certain conditions. As such, it will serve as the primary building block for the convergence proofs that follow.

\begin{lemma}\label{lem:descent}
  The update \eqref{eq:extragradient-1}--\eqref{eq:extragradient-2} of the extragradient algorithm with stepsize $\eta_t>0$ guarantees the following inequality {for any solution $\z_* \in \cZ$ of \eqref{eq:vi}}
  \begin{align*}
    \|\z_{t+1} - \z_*\|_2^2 \leq \|\z_t-\z_*\|_2^2 - (1-\eta_tL_t)\left[\|\z_t-\w_t\|_2^2 + \|\z_{t+1} - \w_t\|_2^2\right] .
  \end{align*} 
\end{lemma}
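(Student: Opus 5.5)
We follow the classical Korpelevich argument, using the two projection inequalities together with the 3-point identity (\cref{lem:3-point-identity}), monotonicity, and the local estimate $L_t$ from \eqref{eq:local-Lipschitz-estimate-1}.

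\textbf{Step 1 (projection defining $\z_{t+1}$).} By the variational characterization of \eqref{eq:extragradient-2}, tested at $\z_*$,
\[
\langle \z_t-\eta_tF(\w_t)-\z_{t+1},\ \z_*-\z_{t+1}\rangle\le 0 .
\]
Applying \cref{lem:3-point-identity} with $\a=\z_{t+1}$, $\b=\z_t$, $\bm c=\z_*$ turns $2\langle \z_t-\z_{t+1},\z_*-\z_{t+1}\rangle$ into squared norms. This gives
\[
\|\z_{t+1}-\z_*\|_2^2\le \|\z_t-\z_*\|_2^2-\|\z_t-\z_{t+1}\|_2^2+2\eta_t\langle F(\w_t),\z_*-\z_{t+1}\rangle .
\]

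\textbf{Step 2 (sign of the operator term).} Split $\langle F(\w_t),\z_*-\z_{t+1}\rangle=\langle F(\w_t),\z_*-\w_t\rangle+\langle F(\w_t),\w_t-\z_{t+1}\rangle$. By monotonicity, $\langle F(\w_t),\w_t-\z_*\rangle\ge\langle F(\z_*),\w_t-\z_*\rangle\ge0$, where the last inequality is \eqref{eq:vi}. So the first piece is nonpositive and can be dropped.

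\textbf{Step 3 (projection defining $\w_t$).} By \eqref{eq:extragradient-1}, tested at $\z_{t+1}\in\cZ$,
\[
\langle \z_t-\eta_tF(\z_t)-\w_t,\ \z_{t+1}-\w_t\rangle\le0 .
\]
Write $-\|\z_t-\z_{t+1}\|_2^2=-\|\z_t-\w_t\|_2^2-\|\w_t-\z_{t+1}\|_2^2-2\langle \z_t-\w_t,\w_t-\z_{t+1}\rangle$. Combining this with the inequality above, the cross terms cancel against $2\eta_t\langle F(\w_t),\w_t-\z_{t+1}\rangle$. What remains is
\[
\|\z_{t+1}-\z_*\|_2^2\le \|\z_t-\z_*\|_2^2-\|\z_t-\w_t\|_2^2-\|\z_{t+1}-\w_t\|_2^2+2\eta_t\langle F(\w_t)-F(\z_t),\w_t-\z_{t+1}\rangle .
\]

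\textbf{Step 4 (the Lipschitz error term).} This is the only delicate step, because it must use the local estimate $L_t$ of \eqref{eq:local-Lipschitz-estimate-1} rather than a global constant. By Cauchy--Schwarz and the definition of $L_t$ (well-defined by \cref{rem:L-def}),
\[
2\eta_t\langle F(\w_t)-F(\z_t),\w_t-\z_{t+1}\rangle\le 2\eta_tL_t\|\w_t-\z_t\|_2\|\w_t-\z_{t+1}\|_2 .
\]
Young's inequality then bounds the right-hand side by $\eta_tL_t\big(\|\z_t-\w_t\|_2^2+\|\z_{t+1}-\w_t\|_2^2\big)$. Substituting this into Step 3 gives the claimed inequality.

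No assumption $\eta_tL_t\le1$ is needed, since the statement holds for any $\eta_t>0$. The only care required is the sign bookkeeping in Step 3's algebraic cancellation.
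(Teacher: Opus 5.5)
Your proof is correct and follows essentially the same route as the paper: the $\z_{t+1}$-projection inequality tested at $\z_*$ combined with the 3-point identity, dropping $\langle F(\w_t),\w_t-\z_*\rangle$ via monotonicity and \eqref{eq:vi}, the $\w_t$-projection inequality tested at $\z_{t+1}$, and then Cauchy--Schwarz with $L_t$ followed by Young's inequality. The paper first lower-bounds $\langle \z_t-\z_{t+1},\z_{t+1}-\z_*\rangle$ and applies the 3-point identity a second time, whereas you expand $\|\z_t-\z_{t+1}\|_2^2$ directly. One minor wording point: in Step 3 the cross term does not literally cancel; it is bounded above by $2\eta_t\langle F(\z_t),\z_{t+1}-\w_t\rangle$ via the projection inequality and then combines with $2\eta_t\langle F(\w_t),\w_t-\z_{t+1}\rangle$, and your displayed inequality is correct.
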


\begin{proof} 
  By the projection operations in \eqref{eq:extragradient-1}--\eqref{eq:extragradient-2}, we have
  \begin{align}
    \langle\w_t-(\z_t-\eta_tF(\z_t)), \w_t-\z\rangle &\leq 0, \label{eq:proj-ineq-1} \\
    \langle\z_{t+1}-(\z_t-\eta_tF(\w_t)), \z_{t+1}-\z\rangle &\leq 0, \label{eq:proj-ineq-2}
  \end{align}
  for all $\z\in\cZ$. Since $\z_*$ is a solution of \eqref{eq:vi} and $F$ is a monotone operator, we have 
  \begin{align}\label{eq:vi-opt-ineq}
    \langle F(\z), \z - \z_* \rangle \geq \langle F(\z_*),\z-\z_*\rangle\geq 0
  \end{align}
  for all $\z \in \cZ$. 
  By \cref{lem:3-point-identity}, 
  \begin{align*}
    \|\z_{t+1}-\z_*\|_2^2 = \|\z_t-\z_*\|_2^2 - \|\z_{t+1}-\z_t\|_2^2 - {2}\langle\z_t-\z_{t+1},\z_{t+1}-\z_*\rangle. 
  \end{align*}
  For the inner product, we have 
  \begin{align*}
    &\hspace{-2em}\langle\z_t-\z_{t+1}, \z_{t+1}-\z_*\rangle \\
    &\geq \eta_t\langle F(\w_t), \z_{t+1}-\z_*\rangle \\
    &= \eta_t\langle F(\w_t), \z_{t+1}-\w_t\rangle + \eta_t\langle F(\w_t), \w_{t}-\z_*\rangle \\
    &\geq \eta_t\langle F(\w_t), \z_{t+1}-\w_t\rangle \\
    &\geq \eta_t\langle F(\w_t), \z_{t+1}-\w_t\rangle - \langle\w_t-(\z_t-\eta_tF(\z_t)), \z_{t+1}-\w_t\rangle \\
    &= \eta_t\langle F(\w_t) - F(\z_t), \z_{t+1}-\w_t\rangle + \langle\z_t-\w_t, \z_{t+1}-\w_t\rangle \\
    &\geq -\eta_t\|F(\w_t) - F(\z_t)\|_2\|\z_{t+1}-\w_t\|_2 + \langle\z_t-\w_t,\z_{t+1}-\w_t\rangle \\
    &= -\eta_tL_t\|\z_t-\w_t\|_2\|\z_{t+1}-\w_t\|_2 + \langle\z_t-\w_t,\z_{t+1}-\w_t\rangle \\
    &\geq  -\tfrac12\eta_tL_t[\|\z_t-\w_t\|_2^2 + \|\z_{t+1}-\w_t\|_2^2] + \langle\z_t-\w_t,\z_{t+1}-\w_t\rangle. 
  \end{align*}
  The first inequality follows from \eqref{eq:proj-ineq-2} with $\z=\z_*$, the second inequality follows from taking $\z=\w_t\in\cZ$ in \eqref{eq:vi-opt-ineq} and $\eta_t\geq0$. The third inequality follows from \eqref{eq:proj-ineq-1} with $\z=\z_{t+1}\in\cZ$. The fourth inequality follows from Cauchy-Schwarz inequality. The last equality follows from the definition \eqref{eq:local-Lipschitz-estimate-1} of $L_t$. The last inequality follows from the identity $2ab\leq a^2+b^2$. 
  Applying \cref{lem:3-point-identity} again, the last inner product term can be rewritten as
  \begin{align*}
    2\langle\z_t-\w_t,\z_{t+1}-\w_t\rangle = \|\z_t-\w_t\|_2^2 + \|\z_{t+1}-\w_t\|_2^2 - \|\z_t - \z_{t+1}\|_2^2. 
  \end{align*}
  Putting things together, we obtain
  \begin{align*}
    \|\z_{t+1}-\z_*\|_2^2
    &= \|\z_t-\z_*\|_2^2 - \|\z_{t+1}-\z_t\|_2^2 - 2\langle\z_t-\z_{t+1}, \z_{t+1}-\z_*\rangle \\
    &\leq \|\z_t-\z_*\|_2^2 - \|\z_{t+1}-\z_t\|_2^2 + \eta_tL_t\left[\|\z_t-\w_t\|_2^2 + \|\z_{t+1}-\w_t\|_2^2\right] \\
    &\qquad - 2\langle\z_t-\w_t,\z_{t+1}-\w_t\rangle \\
    &= \|\z_t-\z_*\|_2^2 - (1 - \eta_tL_t)[\|\z_t-\w_t\|_2^2 + \|\z_{t+1}-\w_t\|_2^2]. 
  \end{align*}
\end{proof}

\cref{lem:descent} is a descent lemma that characterizes the change in the distance $\|\z_t-\z_*\|_2$ to the optimal solution from iteration to iteration. Next, we relate this to a measure of optimality. To this end, the following lemma provides an upper bound on our primary convergence metric the extragradient residual $\|F(\z_{t}) + \bm\xi_{t}\|_2$.

\begin{lemma}\label{lem:residual-bound}
  Suppose the stepsize $\eta_t{>0}$ satisfies $\eta_tL_t<1$, where $L_t$ is defined in \eqref{eq:local-Lipschitz-estimate-1}. Then, the update \eqref{eq:extragradient-1}--\eqref{eq:extragradient-2} of the extragradient algorithm with stepsize $\eta_t$ guarantees the following inequality:
  \begin{align*}
    \eta_t^2\|F(\z_{t+1}) + \bm\xi_{t+1}\|_2^2 \leq \frac{3+2\eta_t^2\hat L_t^2}{1-\eta_tL_t} \left[\|\z_t-\z_*\|_2^2 - \|\z_{t+1}-\z_*\|_2^2 \right]. 
  \end{align*}
\end{lemma}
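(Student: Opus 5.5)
We will bound $\eta_t(F(\z_{t+1})+\bm\xi_{t+1})$ by the displacements $\z_t-\w_t$ and $\z_{t+1}-\w_t$, and then use \cref{lem:descent} to control the squared displacements by the decrease in distance to $\z_*$. The identity behind everything is obtained by expanding the definition \eqref{eq:def:ex-residual}:
\begin{align*}
  \eta_t\bigl(F(\z_{t+1})+\bm\xi_{t+1}\bigr)
  &= \eta_tF(\z_{t+1}) - \z_{t+1} + \z_t - \eta_tF(\w_t) \\
  &= (\z_t-\w_t) + (\w_t-\z_{t+1}) + \eta_t\bigl(F(\z_{t+1})-F(\w_t)\bigr).
\end{align*}
The point is that the operator difference is measured between $\w_t$ and $\z_{t+1}$, so it is controlled by $\hat L_t$ rather than $L_t$.

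By the remark following \eqref{eq:local-Lipschitz-estimate-2}, we have $\|F(\z_{t+1})-F(\w_t)\|_2=\hat L_t\|\w_t-\z_{t+1}\|_2$. The triangle inequality then gives
\begin{align*}
  \eta_t\|F(\z_{t+1})+\bm\xi_{t+1}\|_2 \le \|\z_t-\w_t\|_2 + (1+\eta_t\hat L_t)\|\z_{t+1}-\w_t\|_2 .
\end{align*}
We square this and apply Cauchy--Schwarz as $(a+b+c)^2\le 3(a^2+b^2+c^2)$, with $a=\|\z_t-\w_t\|_2$, $b=\|\z_{t+1}-\w_t\|_2$ and $c=\eta_t\hat L_t\|\z_{t+1}-\w_t\|_2$. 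This yields
\[
  3\|\z_t-\w_t\|_2^2 + (3+3\eta_t^2\hat L_t^2)\|\z_{t+1}-\w_t\|_2^2 .
\]
That coefficient, $3+3\eta_t^2\hat L_t^2$, is slightly worse than the target $3+2\eta_t^2\hat L_t^2$. To get the sharper constant, we instead split the sum as $(a+(b+c))^2\le 3a^2+\tfrac32(b+c)^2$, and bound $(b+c)^2\le 2b^2+2c^2$. This gives $3a^2+3b^2+3c^2$ again, so it does not help. The right choice is a weighted Young inequality $(x+y)^2\le(1+\epsilon)x^2+(1+1/\epsilon)y^2$, with weights picked so that both coefficients are at most $3+2\eta_t^2\hat L_t^2$. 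Concretely, write the vector as $\u+\v$ with
\[
  \u=(\z_t-\w_t)+(\w_t-\z_{t+1}), \qquad \v=\eta_t\bigl(F(\z_{t+1})-F(\w_t)\bigr).
\]
Then
\[
  \|\u+\v\|^2\le \tfrac32\|\u\|^2+3\|\v\|^2 \le 3a^2+3b^2+3\eta_t^2\hat L_t^2b^2 .
\]
If the exact constant $2$ cannot be matched this way, we would try the cruder split
\[
  \|\u+\v\|^2\le 2\|\u\|^2+2\|\v\|^2\le 4a^2+4b^2+2\eta_t^2\hat L_t^2 b^2 .
\]
Alternatively, we would use $\|\u\|^2=\|\z_t-\z_{t+1}\|^2\le 2a^2+2b^2$ together with a split giving $3(a^2+b^2)+2\eta_t^2\hat L_t^2 b^2$, for instance $(1+\tfrac12)\|\u\|^2+3\|\v\|^2$ with further care. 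In every case we aim for a bound of the form
\[
  \eta_t^2\|F(\z_{t+1})+\bm\xi_{t+1}\|_2^2\le (3+2\eta_t^2\hat L_t^2)\bigl[\|\z_t-\w_t\|_2^2+\|\z_{t+1}-\w_t\|_2^2\bigr].
\]

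Finally, since $\eta_tL_t<1$, \cref{lem:descent} rearranges to
\[
  \|\z_t-\w_t\|_2^2+\|\z_{t+1}-\w_t\|_2^2\le \frac{\|\z_t-\z_*\|_2^2-\|\z_{t+1}-\z_*\|_2^2}{1-\eta_tL_t}.
\]
Substituting this into the previous bound gives the claim.

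The main obstacle is the bookkeeping of constants in the Young/Cauchy--Schwarz step, so that the coefficient comes out as exactly $3+2\eta_t^2\hat L_t^2$ and not something slightly larger. The structural step, namely rewriting the residual through $\w_t$ so that $\hat L_t$ appears, is straightforward.
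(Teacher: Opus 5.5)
Your decomposition $\eta_t(F(\z_{t+1})+\bm\xi_{t+1})=(\z_t-\w_t)+(\w_t-\z_{t+1})+\eta_t(F(\z_{t+1})-F(\w_t))$ is the same as the paper's. So are the resulting scalar bound $a+(1+h)b$, with $a=\|\z_t-\w_t\|_2$, $b=\|\z_{t+1}-\w_t\|_2$, $h=\eta_t\hat L_t$, and the closing appeal to \cref{lem:descent}.

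The gap is the step that produces the constant. You flag it as the main obstacle but never resolve it. Every split you actually write down puts a coefficient larger than $3+2h^2$ on one of the terms:
\begin{itemize}
\item the equal-weight split and the $(\tfrac32,3)$ Young split both give $3+3h^2$ on $b^2$;
\item the $(2,2)$ split gives $4$ on $a^2$ and $4+2h^2$ on $b^2$;
\item the last option is only sketched.
\end{itemize}
As written, you have proved the inequality with $3+3\eta_t^2\hat L_t^2$ in place of $3+2\eta_t^2\hat L_t^2$. That would still suffice for \cref{prop:extragradient-adaptive,prop:extragradient-backtracking}, but it is not the stated lemma.

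Your attempts fail because the weights must depend on $h$. All your splits are instances of $(a+b+hb)^2\le a^2/\beta_1+b^2/\beta_2+h^2b^2/\beta_3$ with fixed $\beta_i>0$ summing to $1$. Achieving $3+2h^2$ at $h=0$ forces $\beta_1,\beta_2\ge\tfrac13$, and letting $h\to\infty$ forces $\beta_3\ge\tfrac12$. These cannot hold together.

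The repair is one line. Keep $b$ and $hb$ together and apply Cauchy--Schwarz to the two-term sum:
\[
\bigl(a+(1+h)b\bigr)^2\le\bigl(1+(1+h)^2\bigr)(a^2+b^2)=\bigl(3+2h^2-(1-h)^2\bigr)(a^2+b^2)\le(3+2h^2)(a^2+b^2).
\]
This is even slightly sharper than needed.

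The paper instead applies Titu's lemma to the three terms with $h$-dependent weights $\tfrac{1}{3+2h^2}$, $\tfrac{2}{3+2h^2}$, $\tfrac{2h^2}{3+2h^2}$ on $a$, $b$, $hb$; these sum to $1$. This yields exactly $(3+2h^2)a^2+\tfrac{3+2h^2}{2}b^2+\tfrac{3+2h^2}{2}b^2$. When $\hat L_t=0$ the $hb$ term vanishes, so the zero weight is harmless under the paper's $0/0=0$ convention. With either bound in place, the rest of your argument goes through unchanged.
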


\begin{proof}
  Recall by definition that $\bm\xi_{t+1}=-\frac{1}{\eta_{t}}[\z_{t+1} - (\z_{t} - \eta_{t}F(\w_{t}))]$. Thus, we have
  \begin{align*}
    & \eta_t^2\|F(\z_{t+1}) + \bm\xi_{t+1}\|_2^2 \\
    &= \|\eta_tF(\z_{t+1}) + \z_{t}-\z_{t+1}-\eta_tF(\w_t)\|_2^2 \\
    &\leq \left[\|\eta_t(F(\z_{t+1})-F(\w_t))\|_2 + \|\z_t-\w_t\|_2 + \|\z_{t+1}-\w_t\|_2 \right]^2 \\
    &\leq \tfrac{3+2\eta_t^2\hat L_t^2}{2\eta_t^2\hat L_t^2}\cdot\eta_t^2\|F(\z_{t+1})-F(\w_t)\|_2^2 + \tfrac{3+2\eta_t^2\hat L_t^2}{1}\|\z_t-\w_t\|_2^2 + \tfrac{3+2\eta_t^2\hat L_t^2}{2}\|\z_{t+1}-\w_t\|_2^2 \\
    &\leq \tfrac{3+2\eta_t^2\hat L_t^2}{2}\|\z_{t+1}-\w_t\|_2^2 + \tfrac{3+2\eta_t^2\hat L_t^2}{1}\|\z_t-\w_t\|_2^2 + \tfrac{3+2\eta_t^2\hat L_t^2}{2}\|\z_{t+1}-\w_t\|_2^2 \\
    &= (3+2\eta_t^2\hat L_t^2)[\|\z_t-\w_t\|_2^2 + \|\z_{t+1}-\w_t\|_2^2] \\
    &\leq \frac{3+2\eta_t^2\hat L_t^2}{1-\eta_tL_t}[\|\z_t-\z_*\|_2^2 - \|\z_{t+1}-\z_*\|_2^2]. 
  \end{align*}
  Here, the second inequality follows from Titu's lemma: Given $a_i\in\R$ and $b_i>0$ for $i\in[n]$, we have
  \[ \frac{(a_1+\cdots+a_n)^2}{b_1+\cdots+b_n} \leq \frac{a_1^2}{b_1} + \cdots + \frac{a_n^2}{b_n}. \]
  The third inequality holds due to the definition of $\hat L_t$. 
  The last inequality follows by applying \cref{lem:descent} and noting that $\eta_tL_t<1$. 
\end{proof}

The next lemma establishes the monotonicity of the extragradient residual $\|F(\z_{t}) + \bm\xi_{t}\|_2$ under the condition $\eta_t\hat L_t\leq1$. This is a crucial property for proving the convergence of the actual iterates rather than the ergodic average. 

\begin{lemma}\label{lem:residual-nonincreasing}
  Suppose the stepsize {$\eta_t>0$} satisfies $\eta_t\hat L_t\leq1$. 
  Then the update \eqref{eq:extragradient-1}--\eqref{eq:extragradient-2} of the extragradient algorithm guarantees that $\|F(\z_t) + \bm\xi_t\|_2$ is non-increasing {as $t$ increases}. 
\end{lemma}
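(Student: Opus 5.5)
The plan is to follow the Lyapunov/sum-of-squares argument used by \cite{cai_tight_2022} for the tangent residual, adapted to the extragradient residual and to a step-dependent stepsize. Fix $t$, write $\eta=\eta_t$, and show directly that
\[
\eta^2\|F(\z_t)+\bm\xi_t\|_2^2-\eta^2\|F(\z_{t+1})+\bm\xi_{t+1}\|_2^2\geq 0 .
\]
The stepsize $\eta_{t-1}$ enters only through $\bm\xi_t$. I will use just one property of $\bm\xi_t$: by the projection optimality condition in \eqref{eq:extragradient-2} at the previous iteration, $\bm\xi_t\in\cN_\cZ(\z_t)$, exactly as in the proof of \cref{lem:eg-res-geq-tangent-res}. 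So the inequality is a one-step statement involving only $\z_t,\w_t,\z_{t+1}$ and $\bm\xi_t$, with no constraint needed on $\eta_{t-1}$.

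\textbf{Ingredients.} I will collect the following facts.
\begin{enumerate}[(a)]
\item Normal-cone vectors. From \eqref{eq:extragradient-1}, $\bm\zeta_t\coloneqq\frac1\eta(\z_t-\w_t)-F(\z_t)\in\cN_\cZ(\w_t)$. From \eqref{eq:extragradient-2}, $\bm\xi_{t+1}=\frac1\eta(\z_t-\z_{t+1})-F(\w_t)\in\cN_\cZ(\z_{t+1})$. Also $\bm\xi_t\in\cN_\cZ(\z_t)$.
\item Normal-cone inequalities, which follow from (a):
\begin{itemize}
\item $\langle\bm\xi_t,\w_t-\z_t\rangle\le0$ and $\langle\bm\xi_t,\z_{t+1}-\z_t\rangle\le0$;
\item $\langle\bm\zeta_t,\z_{t+1}-\w_t\rangle\le0$;
\item $\langle\bm\xi_{t+1},\z_t-\z_{t+1}\rangle\le0$ and $\langle\bm\xi_{t+1},\w_t-\z_{t+1}\rangle\le0$.
\end{itemize}
\item Monotonicity: $\langle F(\z_{t+1})-F(\z_t),\z_{t+1}-\z_t\rangle\ge0$.
\item Local Lipschitz estimate: by the definition of $\hat L_t$ and the hypothesis $\eta\hat L_t\le1$,
\[
\eta^2\|F(\z_{t+1})-F(\w_t)\|_2^2\le\|\z_{t+1}-\w_t\|_2^2 .
\]
\end{enumerate}

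\textbf{Reduction to a quadratic form.} I will substitute $\z_t-\w_t=\eta(F(\z_t)+\bm\zeta_t)$ and $\z_t-\z_{t+1}=\eta(F(\w_t)+\bm\xi_{t+1})$. This expresses all displacements through operator values and normal vectors, so both sides become quadratic forms in $F(\z_t),F(\w_t),F(\z_{t+1}),\bm\xi_t,\bm\zeta_t,\bm\xi_{t+1}$. The goal is nonnegative multipliers $\lambda_i$ on the inequalities in (b)--(d) such that
\begin{multline*}
\eta^2\|F(\z_t)+\bm\xi_t\|_2^2-\eta^2\|F(\z_{t+1})+\bm\xi_{t+1}\|_2^2\\
-\sum_i\lambda_i\,(\text{slack of inequality } i)=\|\text{some linear combination}\|_2^2\ge0 .
\end{multline*}
Guided by the unconstrained case (Golowich et al., Gorbunov et al.), I expect the following choice:
\begin{itemize}
\item multiplier $2$ on the monotonicity inequality (c);
\item multiplier $1$ on the Lipschitz inequality (d);
\item multiplier $2$ on each normal-cone inequality that involves $\z_{t+1}-\z_t$ or $\z_{t+1}-\w_t$.
\end{itemize}
The remainder should then be $\|\z_t-\w_t-(\z_{t+1}-\w_t)\cdot(\text{const})\|_2^2$ or a similar perfect square.

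\textbf{Main obstacle.} The hard step is finding the certificate, that is, the exact multipliers and the residual square. The bookkeeping is heavy because three distinct normal vectors at three different points appear. If guessing fails, I will first verify the unconstrained case ($\bm\xi,\bm\zeta\equiv0$), where the identity is known. I will then add the normal-cone terms one at a time, checking that each cross term $\langle\bm\xi_t,\cdot\rangle$, $\langle\bm\zeta_t,\cdot\rangle$, $\langle\bm\xi_{t+1},\cdot\rangle$ that appears with the wrong sign is exactly absorbed by one of the inequalities in (b). As a fallback, I will solve the small SDP/performance-estimation problem in the Gram matrix of these six vectors to read off the multipliers. Finally, since $t$ was arbitrary, the one-step inequality gives that $\|F(\z_t)+\bm\xi_t\|_2$ is non-increasing in $t$.
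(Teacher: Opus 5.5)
Your framework is the right one, and so is your ingredient list. The paper's proof is exactly a nonnegative combination of inequalities from your (b)--(d) plus a discarded square, and you correctly note that $\eta_{t-1}$ enters only through $\bm\xi_t\in\cN_\cZ(\z_t)$.

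The gap is that the certificate, which is the entire content of the lemma, is never produced, and the one you guess cannot work. You weight only the normal-cone inequalities involving $\z_{t+1}-\z_t$ or $\z_{t+1}-\w_t$, so you omit $\langle\bm\xi_t,\w_t-\z_t\rangle\le0$. That inequality is indispensable. Take $\eta=1$, $\bm\zeta_t=\bm\xi_{t+1}=0$, $F(\z_t)=(1,0)$, $F(\w_t)=(0,1)$, $F(\z_{t+1})=(\tfrac12,0)$ and $\bm\xi_t=-F(\z_t)$, so that $\z_t-\w_t=(1,0)$ and $\z_t-\z_{t+1}=(0,1)$. Then monotonicity, the $\hat L_t$ bound, and all four normal-cone inequalities you weight hold, yet the left-hand side equals $-\tfrac14$. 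Hence no nonnegative multipliers on that set give an identity. Your guessed remainder also cannot be right. The left side contains $\eta^2\|\bm\xi_t\|_2^2$, while every slack is at most linear in $\bm\xi_t$ (the displacements involve only $\bm\zeta_t$ and $\bm\xi_{t+1}$). So the square must involve $\bm\xi_t$, not just $\z_t-\w_t$ and $\z_{t+1}-\w_t$. In addition, with your displacement-form inequalities, the weights on monotonicity and the normal-cone terms should be $2\eta$, not $2$.

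The paper writes three inequalities in terms of $\g_t=F(\z_t)+\bm\xi_t$, $\tilde\g_t=-\frac1\eta(\w_t-\z_t)$ and $\hat\g_t=-\frac1\eta(\z_{t+1}-\z_t)$, sums them, and polarizes. This amounts to the identity
\begin{align*}
\eta^2\|F(\z_t)+\bm\xi_t\|_2^2-\eta^2\|F(\z_{t+1})+\bm\xi_{t+1}\|_2^2
&=2\eta\langle F(\z_{t+1})-F(\z_t),\z_{t+1}-\z_t\rangle+\bigl[\|\z_{t+1}-\w_t\|_2^2-\eta^2\|F(\z_{t+1})-F(\w_t)\|_2^2\bigr]\\
&\quad+2\eta\langle\bm\xi_t,\z_t-\w_t\rangle+2\eta\langle\bm\zeta_t,\w_t-\z_{t+1}\rangle+2\eta\langle\bm\xi_{t+1},\z_{t+1}-\z_t\rangle\\
&\quad+\eta^2\|\bm\xi_t-\bm\zeta_t\|_2^2 ,
\end{align*}
and every term on the right is nonnegative by your (b)--(d). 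In this certificate:
\begin{itemize}
\item $\langle\bm\xi_t,\z_{t+1}-\z_t\rangle$ and $\langle\bm\xi_{t+1},\w_t-\z_{t+1}\rangle$ get weight zero;
\item $\langle\bm\xi_t,\w_t-\z_t\rangle$ gets weight $2\eta$;
\item the remainder is $\eta^2\|\bm\xi_t-\bm\zeta_t\|_2^2=\|\eta(F(\z_t)+\bm\xi_t)-(\z_t-\w_t)\|_2^2$.
\end{itemize}
Until this identity (or an equivalent one) is written down and checked, your proposal is a search strategy rather than a proof.
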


\begin{proof}
  Let us denote the residual of the first projection operation in \eqref{eq:extragradient-1} by $\bm\zeta_t\coloneqq-\frac{1}{\eta_t}[\w_t-(\z_t-\eta_tF(\z_t))]$   so that {together with our extragradient residual $\bm\xi_{t+1}$ from \eqref{eq:def:ex-residual} we have}
  \begin{align*}
    \w_t &= \z_t-\eta_t(F(\z_t) + \bm\zeta_t), \\
    \z_{t+1} &= \z_t-\eta_t(F(\w_t) + \bm\xi_{t+1}). 
  \end{align*}
  In addition, define 
  \begin{align*}
    \g_t &\coloneqq F(\z_{t}) + \bm\xi_{t}, \\
    \tilde{\g}_t &\coloneqq F(\z_t) + \bm\zeta_t=-\tfrac{1}{\eta_t}(\w_t-\z_t), \\
    \hat{\g}_t &\coloneqq F(\w_t) + \bm\xi_{t+1} = -\tfrac{1}{\eta_t}(\z_{t+1}-\z_t).
  \end{align*}
  Thus, we would like to show $\|g_{t+1}\|_2 \leq \|g_t\|_2$. 

  Noting that {$\eta_t\bm\zeta_t$ and $\eta_t\bm\xi_t$} are residuals of the projection operation in the update steps \eqref{eq:extragradient-1} and \eqref{eq:extragradient-2} respectively, we have $\langle\bm\zeta_{t},\w_{t}-\z\rangle\geq0$, $\langle\bm\xi_{t},\z_{t}-\z\rangle\geq0$ for any $\z\in\cZ$. Therefore,  
  \begin{align*}
    0\leq \langle\bm\xi_{t+1},\z_{t+1}-\z_t\rangle &= \langle\g_{t+1} - F(\z_{t+1}),\z_{t+1}-\z_t\rangle \leq \langle\g_{t+1} - F(\z_t),\z_{t+1}-\z_t\rangle ,  
  \end{align*}
  where the second inequality follows from the monotonicity of $F$, i.e., $\langle F(\z_{t+1}) - F(\z_t), \z_{t+1}-\z_t\rangle\geq0$. 
  Moreover, using the definitions of $\g_t$ and $\tilde \g_t$ we arrive at
  \begin{align*}
    0\leq \langle\bm\xi_{t},\z_{t}-\w_t\rangle &= \langle\g_t-F(\z_t),\z_{t}-\w_t\rangle, \\
    0\leq \langle\bm\zeta_{t},\w_{t}-\z_{t+1}\rangle &= \langle\tilde\g_t - F(\z_t),\w_{t}-\z_{t+1}\rangle \\
    &= \langle\tilde\g_t - F(\z_t),\w_{t}-\z_{t}\rangle + \langle\tilde\g_t - F(\z_t),\z_{t}-\z_{t+1}\rangle. 
  \end{align*}
  Summing up the preceding three inequalities leads to
  \begin{align*}
    0 &\leq \langle\bm\g_{t+1}-\tilde\g_t,\z_{t+1}-\z_t\rangle + \langle\g_t-\tilde\g_t,\z_{t}-\w_t\rangle \\
    &= -\eta_t\langle\bm\g_{t+1}-\tilde\g_t,\hat\g_t\rangle + \eta_t\langle\g_t-\tilde\g_t,\tilde\g_t\rangle \\
    &= -\eta_t\langle\bm\g_{t+1},\hat\g_t\rangle + \eta_t\langle\tilde\g_t,\hat\g_t\rangle+ \eta_t\langle\g_t,\tilde\g_t\rangle - \eta_t\|\tilde\g_t\|_2^2 \\
    &= \tfrac12\eta_t[\|\g_{t+1}-\hat\g_t\|_2^2 - \|\g_{t+1}\|_2^2 - \|\hat\g_t\|_2^2] - \tfrac12\eta_t[\|\tilde\g_t-\hat\g_t\|_2^2 - \|\tilde\g_t\|_2^2 - \|\hat\g_t\|_2^2] \\
    &\qquad - \tfrac12\eta_t[\|\g_t-\tilde\g_t\|_2^2-\|\g_t\|_2^2 - \|\tilde\g_t\|_2^2] - \eta_t\|\tilde\g_t\|_2^2 \\
    &= \tfrac12\eta_t[\|\g_{t+1}-\hat\g_t\|_2^2 - \|\g_{t+1}\|_2^2 - \|\tilde\g_t-\hat\g_t\|_2^2 - \|\g_t-\tilde\g_t\|_2^2 + \|\g_t\|_2^2], \\
    &\leq \tfrac12\eta_t[\|\g_{t+1}-\hat\g_t\|_2^2 - \|\g_{t+1}\|_2^2 - \|\tilde\g_t-\hat\g_t\|_2^2 + \|\g_t\|_2^2], 
  \end{align*}
  where the third equality follows from the identity $\langle\a,\b\rangle=-\frac12[\|\a-\b\|_2^2-\|\a\|_2^2-\|\b\|_2^2]$,   the last equality follows from reorganization of the terms, and the last inequality follows from $\|\g_t-\tilde\g_t\|_2^2\geq0$. Therefore, as $\eta_t>0$, 
  \begin{align*}
    \|\g_{t+1}\|_2^2 - \|\g_t\|_2^2
    &\leq \|\g_{t+1}-\hat\g_t\|_2^2 - \|\tilde\g_t - \hat\g_t\|_2^2 \\
    &= \|F(\z_{t+1}) - F(\w_t)\|_2^2 - \left\|\tfrac{1}{\eta_t}(\z_{t+1}-\w_t)\right\|_2^2 \\
    &\leq \hat L_t^2\|\z_{t+1}-\w_t\|_2^2 - \tfrac{1}{\eta_t^2}\|\z_{t+1}-\w_t\|_2^2 \\
    &= \tfrac{1}{\eta_t^2}(\eta_t^2\hat L_t^2-1)\|\z_{t+1}-\w_t\|_2^2 \leq 0, 
  \end{align*}
  where the second inequality follows from the definition of $\hat L_t${, and the final conclusion follows from the premise that $\eta_t\hat L_t\leq1$.} 
\end{proof}

With the per-iteration analysis, we are now equipped to analyze specific stepsize schemes. In the following subsections, we apply these general results to provide last-iterate convergence rates for \cref{alg:extragradient-adaptive,alg:extragradient-backtracking}, which are designed to handle globally and locally Lipschitz continuous operators respectively. 

As we will see later, a key idea in developing adaptive stepsizes is to ensure the conditions $\eta_tL_t<1$ and $\eta_t\hat L_t\leq1$ required by \cref{lem:residual-bound,lem:residual-nonincreasing}, so that the extragradient residual is properly upper-bounded and ensuring that we make sufficient progress towards zero.

\subsection{{Lipschitz continuous $F$}}
\label{sec:alg-smooth}

In this subsection, we study the case where the operator $F$ satisfies a global {Lipschitz continuity} condition. Formally, we make the following assumption. 

\begin{assumption}\label{assum:smooth}
$F$ is $L$-Lipschitz, i.e., there exists a constant $L>0$ such that $\|F(\z')-F(\z)\|_2\leq L\|\z'-\z\|_2$ for all $\z,\z'\in\cZ$. 
\end{assumption}

Under \cref{assum:smooth}, a fixed stepsize $\eta_t < 1/L$ would suffice to ensure convergence. However, to overcome the challenge that the global Lipschitz constant $L$ is typically unknown or difficult to estimate in practice, we propose \cref{alg:extragradient-adaptive}, an adaptive variant of the extragradient algorithm, which dynamically adjusts its stepsize $\eta_t$ based on the local geometry of $F$.

\begin{algorithm}[htbp]
  \caption{Parameter-free non-ergodic extragradient (PF-NE-EG) algorithm} 
  \label{alg:extragradient-adaptive}
  \begin{algorithmic}
    \Require Initial solution $\z_0\in\cZ$, initial stepsize $\eta_0>0$, $\theta\in(0,1)$, and $\lambda_t\geq1$ \stt $\lim_{t\to\infty}\lambda_t=1$. 
    \For{$t=1,2,\dots,T$}
      \State Step 1. Stepsize selection: Compute $L_{t-1}$ and $\hat L_{t-1}$ according to \eqref{eq:local-Lipschitz-estimate-1}--\eqref{eq:local-Lipschitz-estimate-2}, and set 
      \begin{align*}
        \eta_t=\min\set{\lambda_{t-1}\eta_{t-1}, \frac{\theta}{L_{t-1}}, \frac{\theta}{\hat L_{t-1}}}, 
      \end{align*}
      unless $\w_{t-1}=\z_{t-1}$, in which case we stop and return $\z_{t-1}$. 
      \State Step 2. Extragradient update:
        \begin{align*}
          \w_t &= \Proj_{\cZ}(\z_t-\eta_tF(\z_t)), \\
          \z_{t+1} &= \Proj_{\cZ}(\z_t-\eta_tF(\w_t)). 
        \end{align*}
    \EndFor
    \Ensure $\z_T$.
  \end{algorithmic}
\end{algorithm}

As noted in \cref{rem:L-def}, $L_t$ is well-defined unless the optimality is reached, at which point the algorithm may terminate. 

The main ingredients in designing the stepsize $\eta_t$ in \cref{alg:extragradient-adaptive} are $1/L_{t-1}$ and $1/\hat L_{t-1}$, the inverses of the local Lipschitz estimates. This is in contrast to the threshold $1/L$ used in classical settings. We further scale $1/L_{t-1}$ by a factor $\theta \in (0,1)$ to ensure $\eta_{t} L_{t-1} < 1$ and $\eta_{t}\hat L_{t-1}\leq1$, which will be essential for the application of \cref{lem:residual-bound,lem:residual-nonincreasing}. 

To prevent erratic behaviors of $\eta_t$ between iterations, we impose $\eta_t \leq \lambda_{t-1}\eta_{t-1}$, where $\{\lambda_t\}_{t \in \mathbb{N}}$ is a sequence of positive numbers converging to $1$. When $\lambda_t = 1$, this yields a sequence of non-increasing stepsizes. By contrast, choosing $\lambda_t > 1$ (e.g., $\lambda_t=1+1/(1+\log(t+1))$) allows $\eta_t$ to increase and better adapt to the local geometry. This distinguishes our approach from most existing adaptive schemes, and it greatly alleviates the impact of poor initializations. Instead of being trapped at a vanishingly small value by an initially large $L_t$ or small $\eta_0$, \cref{alg:extragradient-adaptive} can dynamically adjust the stepsize as the landscape flattens. 

Equipped with this adaptive stepsize scheme, the iterate update of \cref{alg:extragradient-adaptive} is identical to the well-established extragradient.

The stepsize update involves only the computations of $L_t$ and $\hat L_t$ via \eqref{eq:local-Lipschitz-estimate-1}--\eqref{eq:local-Lipschitz-estimate-2} plus some basic operations. In particular, no additional operator evaluations beyond those already used in the extragradient steps are needed, matching the oracle complexity of the non-adaptive algorithm. Moreover, because the stepsize is determined by direct computation rather than an iterative line search, its computational overhead is kept minimal. 

The following result establishes the convergence rate of \cref{alg:extragradient-adaptive}. Specifically, we show that  it achieves $o(1/\sqrt{T})$ last-iterate convergence in the extragradient residual $\|F(\z_T)+\bm\xi_T\|_2$ without requiring any prior knowledge of the Lipschitz constant $L$.

\begin{proposition}\label{prop:extragradient-adaptive}
  Under \cref{assum:smooth}, 
  \cref{alg:extragradient-adaptive} achieves an $o(1/\sqrt{T})$ convergence in the extragradient residual $\|F(\z_T) + \bm\xi_T\|_2$. 
\end{proposition}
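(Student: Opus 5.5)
The plan has four steps: show that the stepsizes stay bounded away from zero, turn \cref{lem:descent} and \cref{lem:residual-bound} into a summable bound on the squared residuals, invoke \cref{lem:residual-nonincreasing} for monotonicity, and finish with \cref{lem:olivier}. Throughout I write $r_t\coloneqq\|F(\z_t)+\bm\xi_t\|_2$, and I assume the algorithm never stops (otherwise \cref{rem:L-def} gives exact optimality).

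\textbf{Step 1 (lower bound on $\eta_t$).} Under \cref{assum:smooth} we have $L_{t-1},\hat L_{t-1}\le L$. Since $\lambda_{t-1}\ge 1$, the update $\eta_t=\min\{\lambda_{t-1}\eta_{t-1},\theta/L_{t-1},\theta/\hat L_{t-1}\}$ gives $\eta_t\ge\min\{\eta_{t-1},\theta/L\}$. By induction,
\[
\eta_t\ge\eta_{\min}\coloneqq\min\{\eta_0,\theta/L\}>0 \quad\text{for all } t .
\]
The same recursion gives the upper bound $\eta_t\le \theta/\max\{L_{t-1},\hat L_{t-1}\}$.

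\textbf{Step 2 (the stepsize conditions).} \cref{lem:residual-bound} needs $\eta_tL_t<1$, and \cref{lem:residual-nonincreasing} needs $\eta_t\hat L_t\le 1$. These involve the \emph{current} estimates, whereas the rule only enforces $\eta_{t+1}L_t\le\theta$ and $\eta_{t+1}\hat L_t\le\theta$. I expect this index shift to be the main obstacle.

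I would first aim to show that a uniform bound $\eta_t\max\{L_t,\hat L_t\}\le\bar\theta<1$ holds for all but finitely many $t$. Call $t$ a ``bad'' iteration if $\eta_t\max\{L_t,\hat L_t\}>\bar\theta$, where $\bar\theta\in(\theta,1)$ is fixed. At a bad iteration the next stepsize is cut by a definite factor:
\[
\eta_{t+1}\le \theta/\max\{L_t,\hat L_t\}<(\theta/\bar\theta)\,\eta_t .
\]
Between bad iterations the stepsize can grow by at most $\prod\lambda_s$, and it can never fall below $\eta_{\min}$.

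There are two cases. If $\sum_t(\lambda_t-1)<\infty$, the total possible growth is bounded, so the number of bad iterations is finite. Otherwise I would exploit that $\lambda_t\to1$, which makes the per-step growth negligible compared with the fixed contraction factor $\theta/\bar\theta$. Combined with the boundedness of $\|\z_t-\z_*\|$ from \cref{lem:descent} on good steps, this should show that bad steps contribute only a summable amount. If this cannot be closed, I would fall back to a weaker target: prove Step 3 along the good iterations only, and use the monotonicity in Step 4 to transfer the bound to all $t$.

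\textbf{Step 3 (summability).} For good $t$ past some index $t_0$, \cref{lem:residual-bound} together with Step 1 gives
\[
\eta_{\min}^2\,r_{t+1}^2\le \frac{3+2\bar\theta^2}{1-\bar\theta}\left[\|\z_t-\z_*\|_2^2-\|\z_{t+1}-\z_*\|_2^2\right].
\]
By \cref{lem:descent}, the sequence $\|\z_t-\z_*\|_2^2$ is nonincreasing on these iterations. The right-hand side therefore telescopes, which yields
\[
\sum_{t\ge t_0} r_{t+1}^2\le \frac{3+2\bar\theta^2}{(1-\bar\theta)\,\eta_{\min}^2}\,\|\z_{t_0}-\z_*\|_2^2<\infty .
\]

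\textbf{Step 4 (monotonicity and the $o$-rate).} Since $\eta_t\hat L_t\le1$ eventually, \cref{lem:residual-nonincreasing} shows that $r_t^2$ is eventually non-increasing. \cref{lem:olivier}, applied to the tail sequence $a_n=r_{t_0+n}^2$, then gives $r_T^2=o(1/T)$, that is, $r_T=o(1/\sqrt{T})$, as claimed.
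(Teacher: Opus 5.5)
Your Steps 1, 3 and 4, together with the case $\sum_t(\lambda_t-1)<\infty$ of Step 2, form a complete proof. From $\eta_{\min}\le\eta_T\le\eta_0\bigl(\prod_{s<T}\lambda_s\bigr)(\theta/\bar\theta)^{N_T}$, the number $N_T$ of bad iterations is bounded. After the last one, \cref{lem:residual-bound} telescopes, \cref{lem:residual-nonincreasing} gives monotonicity, and \cref{lem:olivier} applies to the tail.

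The genuine gap is the case $\sum_t(\lambda_t-1)=\infty$. The statement covers this case, and it includes the choice $\lambda_t=1+1/\log(t+2)$ used in the experiments. There your counting only yields $N_T\le\log\bigl(\eta_0\prod_{s<T}\lambda_s/\eta_{\min}\bigr)/\log(\bar\theta/\theta)$, which tends to infinity. The condition $\lambda_t\to1$ makes each individual growth step small, but the stepsize can still regrow by a fixed factor over ever longer stretches and then be cut again. So ``negligible per-step growth'' does not limit the number of cuts.

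Your fallback does not work either:
\begin{enumerate}[(i)]
  \item At a bad $t$ with $\eta_tL_t\ge1$ (possible, since \cref{alg:extragradient-adaptive} only enforces $\eta_tL_{t-1}\le\theta$), \cref{lem:descent} gives no decrease. Summing over good indices then does not telescope to a multiple of $\|\z_{t_0}-\z_*\|_2^2$, and even boundedness of $\{\z_t\}$ is lost.
  \item At a bad $t$ with $\eta_t\hat L_t>1$, \cref{lem:residual-nonincreasing} does not apply, so $r_{t+1}>r_t$ is possible. Monotonicity therefore cannot carry a bound from good to bad indices, and \cref{lem:olivier} needs a tail that is non-increasing throughout.
\end{enumerate}

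The paper's proof rests on exactly the fact you were trying to establish: no bad iterations occur after some $t_0$. It argues that $\eta_t\ge\tilde\eta\coloneqq\min\{\eta_0,\theta/L\}$ implies $\liminf_t\eta_{t+1}/\eta_t\ge1$. Combined with $\eta_{t+1}\le\lambda_t\eta_t$, this gives $\eta_{t+1}/\eta_t\to1$, hence $\eta_tL_t\le\theta\,\eta_t/\eta_{t+1}\to\theta$, and likewise for $\hat L_t$. It then telescopes from $t_0$ and applies the two lemmas as you do.

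However, that inference is valid only when $\prod_t\lambda_t<\infty$, and there it is just your counting argument: infinitely many cuts by a fixed factor with bounded total growth would force $\eta_t\to0$. When $\prod_t\lambda_t=\infty$, consider a sawtooth sequence that grows by factors $\lambda_t$ from $\tilde\eta$ up to $2\tilde\eta$ and then drops back to $\tilde\eta$. It satisfies $\eta_t\ge\tilde\eta$ and $\eta_{t+1}\le\lambda_t\eta_t$, yet has $\liminf_t\eta_{t+1}/\eta_t\le 1/2$. The paper's argument uses nothing beyond these two properties, so it does not exclude this behavior.

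So the idea missing from your Case 2 is not supplied by the paper's argument either. As written, both proofs establish the proposition rigorously only under $\sum_t(\lambda_t-1)<\infty$, which includes the non-increasing choice $\lambda_t\equiv1$. The general case needs an additional argument that rules out, or controls the effect of, infinitely many bad iterations.
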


\begin{proof}
  Since $L_{t-1},\hat L_{t-1}\leq L$ by definition, the stepsize selection in \cref{alg:extragradient-adaptive} ensures that $\eta_t\geq\min\set{\eta_0,\frac{\theta}{L}}>0$ for all $t\in\N$. This implies $\liminf_{t\to\infty}\eta_{t+1}/\eta_{t}\geq1$. On the other hand, $\eta_{t+1}\leq\lambda_t\eta_t$ and $\lim_{t\to\infty}\lambda_t=1$ implies $\limsup_{t\to\infty}\eta_{t+1}/\eta_{t}\leq\lim_{t\to\infty}\lambda_t=1$. Thus, we have shown that $\lim_{t\to\infty}\eta_{t+1}/\eta_{t}=1$. 
  Since $\eta_{t+1}\leq\frac{\theta}{L_t}$ by the stepsize selection, we have $\eta_tL_t\leq\theta\frac{\eta_t}{\eta_{t+1}}\to\theta$ as $t\to\infty$. 
  Note that $\theta\in(0,1)$ implies $\theta<\frac{\theta+1}{2}<1$. Therefore, we have 
  $\eta_tL_t\leq\frac{\theta+1}{2}$ for $t$ sufficiently large, and similarly, $\eta_t\hat L_t\leq\frac{\theta+1}{2}$ for $t$ sufficiently large. Let $t_0$ be such that both inequalities hold for all $t\geq t_0-1$. 
  By \cref{lem:residual-bound}, for $t\geq t_0$, we have 
  \begin{align*}
    \eta_{t-1}^2\|F(\z_t) + \bm\xi_t\|_2^2 
    &\leq \frac{3+2\eta_{t-1}^2\hat L_{t-1}^2}{1-\eta_{t-1}L_{t-1}}[\|\z_{t-1}-\z_*\|_2^2 - \|\z_t-\z_*\|_2^2] \\
    &\leq \frac{6 + (\theta+1)^2}{1-\theta}[\|\z_{t-1}-\z_*\|_2^2 - \|\z_t-\z_*\|_2^2]. 
  \end{align*}
  Summing up the above inequality for $t=t_0,\dots,T$ and noting that the right-hand side telescopes, we have
  \begin{align*}
    \sum_{t=t_0}^{T}\eta_{t-1}^2\|F(\z_t) + \bm\xi_t\|_2^2 \leq \frac{6 + (\theta+1)^2}{1-\theta}\|\z_{t_0-1}-\z_*\|_2^2 < +\infty.
  \end{align*}
  {By taking the limit of both sides as $T\to\infty$, we get  $\sum_{t\ge t_0}\eta_{t-1}^2\|F(\z_t) + \bm\xi_t\|_2^2 <+\infty$. }
  Recall that $\eta_t\geq\min\set{\eta_0,\frac{\theta}{L_{t-1}},\frac{\theta}{\hat{L}_{t-1}}} \ge \min\set{\eta_0,\frac{\theta}{L}}:= \tilde{\eta} >0$ for all $t$. Thus, $\sum_{t\geq t_0}\|F(\z_t)+\bm\xi_t\|_2^2 \leq \frac{1}{\tilde{\eta}^2}  \sum_{t\ge t_0}\eta_{t-1}^2\|F(\z_t) + \bm\xi_t\|_2^2  < +\infty$. Then, from \cref{lem:residual-nonincreasing,lem:olivier}, we conclude that $\|F(\z_T) + \bm\xi_T\|_2^2 = o(1/T)$. 
\end{proof}

The proof of \cref{prop:extragradient-adaptive} shows that, while the stepsize update in \cref{alg:extragradient-adaptive} explicitly only enforces $\eta_{t} L_{t-1} < 1$ and $\eta_{t}\hat L_{t-1}\leq1$, after a sufficient number of iterations, $\eta_t L_t < 1$ and $\eta_t\hat L_t\leq1$ are eventually satisfied, which leads to an $o(1/\sqrt{T})$ rate of convergence in terms of the extragradient residual.

\subsection{Local Lipschitzness: non-monotone backtracking line search}
\label{sec:alg-backtracking-new}

While \cref{alg:extragradient-adaptive} achieves convergence under the assumption that $F$ is Lipschitz continuous, many practical problems, such as those with exponential growth over an unbounded domain, satisfy only a local {Lipschitz continuity} condition for $F$. To address such cases, in this section we work with the following {less stringent} assumption. 

\begin{assumption}\label{assum:local-smooth}
$F$ is locally Lipschitz continuous for all $\z\in\cZ$. That is, for any $\z\in\cZ$, there exists $L(\z)>0$ and a neighborhood $\N(\z)\subset\cZ$ of $\z$ such that 
\begin{align*}
  \|F(\z')-F(\z'')\|_2 \leq L(\z) \|\z'-\z''\|_2, \qquad\forall\z',\z''\in\N(\z). 
\end{align*}
\end{assumption}

To circumvent the lack of a global Lipschitz constant, we incorporate a backtracking line search procedure into \cref{alg:extragradient-adaptive} {while also allowing for non-monotone stepsizes}; see \cref{alg:extragradient-backtracking} for the formal description. 
\begin{algorithm}[htbp]
  \caption{Parameter-free non-ergodic extragradient (PF-NE-EG) algorithm with {non-monotone} backtracking line search }
  \label{alg:extragradient-backtracking}
  \begin{algorithmic}
    \Require Initial solution $\z_0\in\cZ$, initial stepsize $\eta_0>0$, $\theta\in(0,1)$, $\rho\in(0,1)$, and $\lambda_t\geq1$ \stt $\lim_{t\to\infty}\lambda_t=1$. 
    \For{$t=1,2,\dots,T$}
      \State {Step 1. Stepsize initialization: Compute $L_{t-1}$ and $\hat L_{t-1}$ according to \eqref{eq:local-Lipschitz-estimate-1}--\eqref{eq:local-Lipschitz-estimate-2}, and set 
      \begin{align*}
        \bar\eta_t = \min\set{\lambda_{t-1}\eta_{t-1}, \frac{\theta}{L_{t-1}}, \frac{\theta}{\hat L_{t-1}}}, 
      \end{align*}}
      \State Step 2. Backtracking: Starting with $\eta=\bar\eta_t$, decrease it by a factor of $\rho$ iteratively until it satisfies the conditions
      \begin{align}\label{eq:backtracking}
        \eta\frac{\|F(\w_t(\eta))-F(\z_t)\|_2}{\|\w_t(\eta)-\z_t\|_2} &\leq {\frac{\theta+1}{2}} < 1, \\
        \eta\frac{\|F(\w_t(\eta))-F(\z_{t+1}(\eta))\|_2}{\|\w_t(\eta)-\z_{t+1}(\eta)\|_2} &\leq 1,\quad\text{if }\w_t(\eta)\neq\z_{t+1}(\eta), \label{eq:backtracking-2}
      \end{align}
      where $\w_t(\eta)\coloneqq\Proj_{\cZ}(\z_t-\eta F(\z_t))$ and $\z_{t+1}(\eta)\coloneqq\Proj_{\cZ}(\z_t-\eta F(\w_t(\eta)))$, unless $\w_{t}(\eta)=\z_{t}$, in which case we stop and return $\z_t$. Set  $\eta_t=\eta$. 
      \State Step 3. Extragradient update:
        \begin{align*}
          \w_t &= \Proj_{\cZ}(\z_t-\eta_tF(\z_t)), \\
          \z_{t+1} &= \Proj_{\cZ}(\z_t-\eta_tF(\w_t)).
        \end{align*}
    \EndFor
    \Ensure $\z_T$.
  \end{algorithmic}
\end{algorithm}

Recall from \cref{rem:L-def} that we may assume $\|\w_t(\eta)-\z_t\|_2\neq0$ in \eqref{eq:backtracking} unless optimality is reached, in which case we simply terminate and return {$\z_t$}. On the other hand, if $\w_t(\eta)=\z_{t+1}(\eta)$, the condition \eqref{eq:backtracking-2} is viewed as automatically satisfied, and we have $\hat L_t=0$ by definition \eqref{eq:local-Lipschitz-estimate-2}. 

The main component of \cref{alg:extragradient-backtracking} is the backtracking line search, which is designed to directly satisfy the conditions $\eta_tL_t<1$ and $\eta_t\hat L_t\leq1$ of \cref{lem:residual-bound,lem:residual-nonincreasing} in every iteration. The factor $\theta \in (0, 1)$ again ensures that $\eta_tL_t$ is well separated from $1$, which is crucial for the convergence analysis. 

Although the backtracking line search procedure in \cref{alg:extragradient-backtracking} is a natural extension of \cref{alg:extragradient-adaptive}, it is worth noting that such an extension is not readily applicable to many existing adaptive regimes. In particular, aggregation-type stepsize regimes (e.g., \citep{antonakopoulos_extra-gradient_2024}), which rely on the aggregation based on all historical iterates, do not lend themselves easily to backtracking conditions. In these algorithms, the stepsize is a non-increasing function of the entire history, making it difficult to cope with local Lipschitz continuity.

While the line search introduces additional operator evaluations, the total computational overhead remains well-controlled. As we will see in \cref{lem:finite-backtracking}, the number of failed backtracking steps is finite. As such, it  adds only a constant to the overall complexity of the algorithm. 
Before presenting the main convergence result, we first ensure in the following lemma that the backtracking line search is well-defined.

\begin{lemma}\label{lem:backtracking-stops}
  Suppose \cref{assum:local-smooth} holds. At each iteration, the backtracking procedure in \cref{alg:extragradient-backtracking} stops within finitely many operations. Thus, $\eta_t>0$ holds for all $t$ until termination. 
\end{lemma}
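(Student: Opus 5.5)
Fix an iteration $t$ with $\z_t$ not a solution (otherwise the procedure stops at the first trial with $\w_t(\eta)=\z_t$). The plan is to show that both conditions \eqref{eq:backtracking}--\eqref{eq:backtracking-2} hold for every sufficiently small $\eta>0$. Since the trial stepsizes are $\bar\eta_t\rho^k$ with $\bar\eta_t>0$ (by induction, $\eta_{t-1}>0$ and $L_{t-1},\hat L_{t-1}<\infty$ make $\bar\eta_t>0$), this implies the loop ends after finitely many reductions at some $\eta_t=\bar\eta_t\rho^k>0$. The key ingredient is the local Lipschitz neighborhood of the fixed point $\z_t$: by \cref{assum:local-smooth} there are $L(\z_t)$ and a ball $B(\z_t,r)\cap\cZ\subseteq\N(\z_t)$, with $r>0$, on which $F$ is $L(\z_t)$-Lipschitz.

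The first step is to show that $\w_t(\eta)$ and $\z_{t+1}(\eta)$ lie in this ball for small $\eta$. Nonexpansiveness of $\Proj_\cZ$ together with $\z_t=\Proj_\cZ(\z_t)$ gives $\|\w_t(\eta)-\z_t\|_2\le\eta\|F(\z_t)\|_2$. Hence, for $\eta\le\eta'$ with $\eta'$ small, $\w_t(\eta)$ stays in a smaller ball $B(\z_t,r/2)$, where $\|F(\w_t(\eta))\|_2\le\|F(\z_t)\|_2+L(\z_t)r/2=:M$. Then $\|\z_{t+1}(\eta)-\z_t\|_2\le\eta\|F(\w_t(\eta))\|_2\le\eta M$, which is below $r$ once $\eta$ is small enough. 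So there is a threshold $\eta''>0$ such that for all $\eta\le\eta''$ the three points $\z_t,\w_t(\eta),\z_{t+1}(\eta)$ all lie in $\N(\z_t)$.

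Given this, the ratios in \eqref{eq:backtracking}--\eqref{eq:backtracking-2} are bounded by $L(\z_t)$: the denominator $\|\w_t(\eta)-\z_t\|_2$ is nonzero because we have not terminated, and \eqref{eq:backtracking-2} is vacuous when $\w_t(\eta)=\z_{t+1}(\eta)$. Both conditions therefore hold as soon as $\eta\le\min\{\eta'',\tfrac{\theta+1}{2L(\z_t)}\}$. The backtracking thus stops after at most $\lceil\log(\bar\eta_t/\min\{\cdot\})/\log(1/\rho)\rceil$ reductions, each requiring finitely many operations, and the accepted $\eta_t$ is positive. Inducting over $t$ gives the claim for all $t$ until termination.

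The main obstacle is the first step. Local Lipschitzness is only a pointwise neighborhood statement, so we must make sure that both trial points, including $\z_{t+1}(\eta)$, which depends on $F(\w_t(\eta))$, fall into the same neighborhood of $\z_t$. This requires the two-stage bound on $\|F(\w_t(\eta))\|_2$, which uses continuity of $F$ near $\z_t$. A related subtlety is that $\N(\z)$ should be read as containing a relative ball $B(\z_t,r)\cap\cZ$; I would state this interpretation explicitly.
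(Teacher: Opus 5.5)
Your proposal is correct and follows essentially the same route as the paper. Nonexpansiveness of $\Proj_\cZ$ gives $\|\w_t(\eta)-\z_t\|_2\le\eta\|F(\z_t)\|_2$. Then $\z_{t+1}(\eta)$ is placed in the same Lipschitz neighborhood of $\z_t$ for small $\eta$, so both backtracking ratios are bounded by $\eta L(\z_t)$.

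The only difference is cosmetic. You control $\|\z_{t+1}(\eta)-\z_t\|_2\le\eta\|F(\w_t(\eta))\|_2$ through a local bound on $\|F\|_2$, whereas the paper bounds $\|\w_t(\eta)-\z_{t+1}(\eta)\|_2\le\eta^2L(\z_t)\|F(\z_t)\|_2$ and applies the triangle inequality. Your explicit checks that $\bar\eta_t>0$ and that the neighborhood contains a relative ball around $\z_t$ are useful additions that the paper leaves implicit.
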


\begin{proof}
    By local Lipschitz continuity of $F$, there exists a neighborhood $\N(\z_t)$ of $\z_t$ such that 
  \begin{align*}
    \|F(\z)-F(\z')\|_2 \leq L(\z_t) \|\z-\z'\|_2
  \end{align*}
  for any $\z,\z'\in\N(\z_t)$. 
Recall $\w_t(\eta)=\Proj_{\cZ}(\z_t-\eta F(\z_t))$, then as $z_t\in\cZ$ and so $\Proj_{\cZ}(\z_t)=\z_t$, using the nonexpansiveness of projection operation (see \cite[A.(3.1.6)]{hiriart-urruty_fundamentals_2001}), we get 
  \begin{align}
  \|\w_t(\eta)-\z_t\|_2 &= \left\| \Proj_{\cZ}(\z_t-\eta F(\z_t)) - \Proj_{\cZ}(\z_t) \right\|_2  \notag \\
  &\leq \left\| (\z_t-\eta F(\z_t)) - \z_t \right\|_2 = \eta\|F(\z_t)\|_2. \label{eq:local-smooth-distance-bound}
  \end{align} 
  Hence, for sufficiently small $\eta>0$, we can guarantee $\w_t(\eta)\in\N(\z_t)$ and $\eta L(\z_t)\leq\theta$. Then, for such a sufficiently small $\eta>0$, using the local Lipschitz continuity of $F$,  we have 
  \begin{align*}
    \eta\frac{\|F(\w_t(\eta))-F(\z_t)\|_2}{\|\w_t(\eta)-\z_t\|_2} 
    \leq \eta L(\z_t) \leq \theta \leq \frac{\theta+1}{2}<1 ,
  \end{align*}
  {where the last two inequalities follow from $\theta\in(0,1)$.}   In addition, using the definition of $\w_t(\eta)$ and $\z_{t+1}(\eta)$ and nonexpansiveness of the projection, we have 
  \begin{align*}
  \|\w_t(\eta)-\z_{t+1}(\eta)\|_2 &\leq \eta\|F(\z_t)-F(\w_t(\eta))\|_2 \\
  & \leq\eta L(\z_t)\|\z_t-\w_t(\eta)\|_2\leq\eta^2L(\z_t)\|F(\z_t)\|_2, 
  \end{align*}
  where the second inequality follows from $\w_t(\eta)\in\N(\z_t)$ and the local Lipschitz continuity of $F$, and
   the last inequality follows from \eqref{eq:local-smooth-distance-bound}.
  Thus, 
  \begin{align*}
  \|\z_{t+1}(\eta)-\z_t\|_2 &\leq \|\z_{t+1}(\eta)-\w_t(\eta)\|_2 + \|\w_t(\eta)-\z_t\|_2 \\
  &\leq \eta^2L(\z_t)\|F(\z_t)\|_2 + \eta\|F(\z_t)\|_2.
  \end{align*} 
  If $\w_t(\eta)\neq\z_{t+1}(\eta)$,
   then when $\eta>0$ is sufficiently small, $\w_t(\eta),\z_{t+1}(\eta)\in\N(\z_t)$ and $\eta L(\z_t)\leq1$, and 
  \begin{align*}
    \eta\frac{\|F(\w_t(\eta))-F(\z_{t+1}(\eta))\|_2}{\|\w_t(\eta)-\z_{t+1}(\eta)\|_2}
    \leq \eta L(\z_t) \leq 1. 
  \end{align*}
  Therefore, at iteration $t$, \eqref{eq:backtracking} and \eqref{eq:backtracking-2} can be satisfied when $\eta>0$ is sufficiently small, i.e., after $\eta$ is decreased finitely many times. 
\end{proof}

Next, we characterize the overall additional cost incurred by the line search procedure. The following lemma shows that the total number of additional operator evaluations during the line search procedure is finite. In other words, the algorithm eventually performs only two operator evaluations per iteration, thereby matching the oracle complexity of the standard extragradient method.

\begin{lemma}\label{lem:finite-backtracking}
  Suppose \cref{assum:local-smooth} holds. The backtracking line search procedure in \cref{alg:extragradient-backtracking} stops decreasing the stepsize within finitely many operations throughout all the iterations. In particular, there exists $\bar\eta>0$ such that $\eta_t\geq\bar\eta$ for all $t\in\N$. 
\end{lemma}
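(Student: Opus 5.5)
The plan is to first confine all trial points to a fixed compact set. On that set \cref{assum:local-smooth} becomes a uniform Lipschitz bound, so every trial stepsize below an explicit threshold is accepted. Both claims of \cref{lem:finite-backtracking} are then derived from this threshold.

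\textbf{Step 1 (bounded iterates).} Every accepted $\eta_t$ satisfies \eqref{eq:backtracking}, so $\eta_tL_t\le\frac{\theta+1}{2}<1$. \cref{lem:descent} then gives $\|\z_{t+1}-\z_*\|_2\le\|\z_t-\z_*\|_2\le\|\z_0-\z_*\|_2=:D$ for all $t$. Hence every $\z_t$ lies in the compact set $K_0\coloneqq\{\z\in\cZ:\|\z-\z_*\|_2\le D\}$, and by continuity $M\coloneqq\max_{K_0}\|F\|_2<\infty$.

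\textbf{Step 2 (uniform Lipschitz constant near $K_0$).} Cover $K_0$ by the neighborhoods $\N(\z)$ of \cref{assum:local-smooth}. Take a finite subcover and a Lebesgue number $\delta>0$. This gives a constant $L_K$ such that $\|F(\z')-F(\z'')\|_2\le L_K\|\z'-\z''\|_2$ whenever $\z',\z''$ lie within $\delta$ of a common point of $K_0$.

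\textbf{Step 3 (acceptance threshold).} Repeat the estimates of \cref{lem:backtracking-stops} with the uniform constants. By \eqref{eq:local-smooth-distance-bound}, $\|\w_t(\eta)-\z_t\|_2\le\eta M$. The same chain of inequalities gives $\|\z_{t+1}(\eta)-\z_t\|_2\le\eta M+\eta^2L_KM$. Set
\[ c\coloneqq\min\set{\tfrac{\theta}{L_K},\ \tfrac{\delta}{M(1+\theta)}}. \]
Then for every $t$ and every trial $\eta\le c$, both points $\w_t(\eta)$ and $\z_{t+1}(\eta)$ lie within $\delta$ of $\z_t\in K_0$. It follows that $\eta L_K\le\theta$, so both \eqref{eq:backtracking} and \eqref{eq:backtracking-2} hold. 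Consequently a reduction can fail only at a trial value exceeding $c$, and the accepted stepsize is at least $\min\{\bar\eta_t,\rho c\}$.

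\textbf{Step 4 (uniform lower bound).} By construction $L_{t-1},\hat L_{t-1}\le L_K$, since these are ratios of points that all lie within $\delta$ of $\z_{t-1}\in K_0$. Therefore $\bar\eta_t\ge\min\{\eta_{t-1},\theta/L_K\}$. Induction on $t$ then gives
\[ \eta_t\ge\bar\eta\coloneqq\min\set{\eta_0,\rho c,\theta/L_K}>0 \quad\text{for all } t. \]

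\textbf{Step 5 (finitely many failed reductions) --- the main obstacle.} Step 3 bounds the number of failures at a single iteration by $\log_{1/\rho}(\bar\eta_t/c)+1$, and Step 4 bounds the stepsizes from below. What Steps 1--4 do \emph{not} rule out is infinitely many iterations each containing at least one failure, because the factor $\lambda_{t-1}>1$ can push $\bar\eta_t$ above $c$ again and again.

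To exclude this I would argue asymptotically, as follows.
\begin{itemize}
\item[(a)] Summing \cref{lem:residual-bound} (its constants are uniformly bounded) together with the lower bound $\eta_t\ge\bar\eta$ gives $\sum_t\big(\|\z_t-\w_t\|_2^2+\|\z_{t+1}-\w_t\|_2^2\big)<\infty$.
\item[(b)] Combining (a) with Fej\'er monotonicity in the standard Opial way, $\z_t$ converges to a solution $\bar\z$, and $\w_t$ converges to $\bar\z$ as well.
\item[(c)] Hence all current and trial points eventually lie inside the single neighborhood $\N(\bar\z)$, with the one Lipschitz constant $L(\bar\z)$ governing both the estimates $L_{t-1},\hat L_{t-1}$ and the trial ratios.
\item[(d)] I would then compare a failing trial ratio at iteration $t$ with $L_{t-1}$, using $\bar\eta_t\le\theta/L_{t-1}$, $\lambda_{t-1}\to1$, and the fact that consecutive points nearly coincide. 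The goal is a contradiction with the failure condition $\eta\cdot(\text{ratio})>\frac{\theta+1}{2}$, using the slack between $\theta$ and $\frac{\theta+1}{2}$.
\end{itemize}

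The delicate point is (d). The ratio at new points need not be close to $L_{t-1}$ for a merely locally Lipschitz $F$. This is where I expect the real difficulty, and where the slack $\frac{\theta+1}{2}-\theta$ must be exploited.
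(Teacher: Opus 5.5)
\textbf{The gap.} Your Steps 1--4 establish only the ``in particular'' clause, the uniform lower bound on $\eta_t$. The main assertion of the lemma is that only finitely many reductions fail over the entire run. That is exactly your unfinished Step 5(d), so as it stands the proposal does not prove the statement.

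\textbf{A repair needed in Step 4.} When the accepted $\eta_{t-1}$ exceeds $c$, Step 3 gives no control on $\|\w_{t-1}-\z_{t-1}\|_2$. So ``$L_{t-1},\hat L_{t-1}\le L_K$ by construction'' is unjustified. Two fixes are available.
\begin{enumerate}[(i)]
\item Use the acceptance tests themselves. If $\eta_{t-1}>c$, they give $L_{t-1}\le\frac{\theta+1}{2\eta_{t-1}}<\frac{\theta+1}{2c}$ and $\hat L_{t-1}\le 1/\eta_{t-1}<1/c$, hence $\bar\eta_t\ge\min\{\eta_{t-1},\theta/L_K,\theta c\}$.
\item Follow the paper: \cref{lem:descent} gives $\|\z_t-\w_t\|_2^2\le\frac{2}{1-\theta}\|\z_t-\z_*\|_2^2$, which puts every $\w_t$ in one fixed compact ball.
\end{enumerate}
With this repair your lower bound is sound. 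Your $\rho c$ term is genuinely needed: the paper's bound $\bar\eta_t\ge\min\{\eta_0,\theta/L(\z_0,\z_*)\}$ tacitly treats $\eta_{t-1}$ as if it had not been backtracked.

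\textbf{How the paper handles the missing piece, and a caveat.} The paper does not pass through convergence of the iterates or compare Lipschitz ratios at different points, as your (b)--(d) attempt. It asserts $\bar\eta_{t+1}/\bar\eta_t\to1$, from the uniform lower bound together with $\bar\eta_{t+1}\le\lambda_t\eta_t\le\lambda_t\bar\eta_t$. It then uses the slack between $\theta$ and $\frac{\theta+1}{2}$.

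Once such a ratio limit holds, finiteness is immediate and needs no slack. A failed reduction at iteration $t$ gives $\eta_t\le\rho\bar\eta_t$, hence $\bar\eta_{t+1}/\bar\eta_t\le\rho\lambda_t$, which stays bounded away from $1$.

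However, the ratio limit does not follow from a positive lower bound and $\lambda_t\to1$ alone. When $\prod_t\lambda_t=\infty$, as for the logarithmic choices of $\lambda_t$ suggested in the paper, a sequence can creep upward by the factors $\lambda_t$ and drop by a fixed factor infinitely often. This is precisely the scenario you flagged at the start of Step 5.

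\textbf{A clean completion under an extra hypothesis.} Assume $\prod_t\lambda_t<\infty$, e.g.\ $\sum_t(\lambda_t-1)<\infty$. Let $k_t$ be the number of failures at iteration $t$ and $N_t=\sum_{s\le t}k_s$. Unrolling $\eta_t=\rho^{k_t}\bar\eta_t\le\rho^{k_t}\lambda_{t-1}\eta_{t-1}$ and combining with your lower bound gives $\bar\eta\le\eta_t\le\eta_0\rho^{N_t}\prod_{s<t}\lambda_s$. Hence $N_t\le\log_{1/\rho}\big(\eta_0\prod_s\lambda_s/\bar\eta\big)$ uniformly in $t$. Your Steps 1--4 plus this count then form a complete proof.

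Under only $\lambda_t\to1$, the same count shows merely that failures have zero density. Step 5 would then need genuinely new information about how the trial ratios at $\w_t(\bar\eta_t)$ relate to earlier estimates.
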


\begin{proof}  
  From \cref{lem:descent}, we have 
  \begin{align*}
    \|\z_{t+1} - \z_*\|_2^2 
    &\leq \|\z_t - \z_*\|_2^2 - (1 - \eta_tL_t) \left[\|\z_t-\w_t\|_2^2 + \|\z_{t+1} - \w_t\|_2^2 \right]. 
  \end{align*}
  By our line search procedure, we have $\eta_tL_t\leq\frac{\theta+1}{2}<1$, thus $\|\z_t-\z_*\|_2 \leq \|\z_0-\z_*\|_2$, which shows $\{\z_t\}_{t\in\N}$ is bounded. 
  {Since $1-\eta_tL_t\geq\frac{1-\theta}{2}>0$, we also have $\|\z_t-\w_t\|_2^2 + \|\z_{t+1} - \w_t\|_2^2\leq\frac{2}{1-\theta}[\|\z_t-\z_*\|_2^2 - \|\z_{t+1}-\z_*\|_2^2]$. Therefore, $\|\z_t-\w_t\|_2^2 \leq\frac{2}{1-\theta}\|\z_t-\z_*\|_2^2$, and 
  \begin{align*}
    \|\w_t-\z_*\|_2
    &\leq \|\w_t-\z_t\|_2 + \|\z_t-\z_*\|_2 \\
    &\leq \sqrt{\tfrac{2}{1-\theta}}\|\z_t-\z_*\|_2 + \|\z_t-\z_*\|_2 \\
    &\leq \left(\sqrt{\tfrac{2}{1-\theta}} + 1\right) \|\z_0-\z_*\|_2. 
  \end{align*}
  This shows that $\{\w_t\}_{t\in\N}$ is also bounded. The local Lipschitz continuity of $F$ implies its Lipschitz continuity on any compact set (see \cite[Theorem 2.1.6]{cobzas_lipschitz_2019}). 
  Thus, there exists a Lipschitz constant $L(\z_0,\z_*)>0$ \stt
  \begin{align*}
    \|F(\z')-F(\z)\|_2 \leq L(\z_0,\z_*) \|\z'-\z\|_2. 
  \end{align*}
  for any $\z,\z'\in\cZ$ satisfying $\|\z-\z_*\|_2,\|\z'-\z_*\|_2\leq\left(\sqrt{\tfrac{2}{1-\theta}} + 1\right) \|\z_0-\z_*\|_2$. Then, by definition, we have $L_{t-1},\hat L_{t-1}\leq L(\z_0,\z_*)$.}
  The stepsize selection in \cref{alg:extragradient-backtracking} ensures that $\bar\eta_t\geq\min\set{\eta_0,\frac{\theta}{L(\z_0,\z_*)}}>0$ for all $t\in\N$. This implies $\liminf_{t\to\infty}\bar\eta_{t+1}/\bar\eta_{t}\geq1$. On the other hand, {by definition of $\bar{\eta}_{t+1}$, we have} $\bar\eta_{t+1}\leq\lambda_t\eta_t\leq\lambda_t\bar\eta_t$ {(since $\eta_t\le \bar\eta_t$ for all $t$). Then, together with} $\lim_{t\to\infty}\lambda_t=1$ this implies $\limsup_{t\to\infty}\bar\eta_{t+1}/\bar\eta_{t}\leq\lim_{t\to\infty}\lambda_t=1$. Thus, we have shown that $\lim_{t\to\infty}\bar\eta_{t+1}/\bar\eta_{t}=1$. 
  Since $\bar\eta_{t+1}\leq\frac{\theta}{L_t}$ by the stepsize selection, we have $\bar\eta_tL_t\leq\theta\frac{\bar\eta_t}{\bar\eta_{t+1}}\to\theta$ as $t\to\infty$. 
  Note that $\theta\in(0,1)$ implies $\theta<\frac{\theta+1}{2}<1$. Therefore, we have 
  $\bar\eta_tL_t\leq\frac{\theta+1}{2}$ for sufficiently large $t$, and similarly, $\bar\eta_t\hat L_t\leq\frac{\theta+1}{2}$ for sufficiently large $t$ as well. Let $t_0$ be such that both inequalities hold for $t\geq t_0$. 
  Then, for $t\geq t_0$, \eqref{eq:backtracking}--\eqref{eq:backtracking-2} are satisfied by $\eta=\bar\eta_t$. Therefore, no backtracking step is needed thereafter. Noting from \cref{lem:backtracking-stops} that each iteration performs finitely many backtracking steps, we conclude that the total number of backtracking steps in \cref{alg:extragradient-backtracking} is finite. 
\end{proof}

Finally, we are now ready to state the convergence rate for the locally Lipschitz setting. As shown below, \cref{alg:extragradient-backtracking} achieves a rate of $o(1/\sqrt{T})$, i.e., the same order as \cref{alg:extragradient-adaptive}. In fact, by explicitly enforcing the conditions $\eta_{t} L_{t} < 1$ and $\eta_{t}\hat L_{t}\leq1$ via backtracking, the resulting bounds of  \cref{alg:extragradient-backtracking} are slightly better in terms of the constants, with the tradeoff being a constant number of additional line search steps.

\begin{proposition}\label{prop:extragradient-backtracking}
  Suppose \cref{assum:local-smooth} holds.   Then,   \cref{alg:extragradient-backtracking} achieves an $o(1/\sqrt{T})$ convergence in the extragradient residual $\|F(\z_T) + \bm\xi_T\|_2$. 
\end{proposition}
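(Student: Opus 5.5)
The plan is to mirror the proof of \cref{prop:extragradient-adaptive}, with the backtracking conditions now supplying directly what previously held only eventually. By the line search rules \eqref{eq:backtracking}--\eqref{eq:backtracking-2}, and by \cref{lem:backtracking-stops}, every iteration before termination produces an $\eta_t>0$ satisfying
\[
  \eta_tL_t\leq\tfrac{\theta+1}{2}<1 \quad\text{and}\quad \eta_t\hat L_t\leq1 .
\]
The second inequality is trivial when $\w_t=\z_{t+1}$, since then $\hat L_t=0$. These bounds hold for all $t$, not only for large $t$, so no index $t_0$ is needed. If the algorithm terminates at some $\z_t$, then $\z_t$ is a solution by \cref{rem:L-def} and there is nothing to prove. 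We therefore assume the algorithm runs indefinitely.

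First, apply \cref{lem:residual-bound} at iteration $t-1$. Using $\eta_{t-1}\hat L_{t-1}\leq 1$ and $1-\eta_{t-1}L_{t-1}\geq\frac{1-\theta}{2}$, this gives
\[
  \eta_{t-1}^2\|F(\z_t)+\bm\xi_t\|_2^2\leq \frac{10}{1-\theta}\left[\|\z_{t-1}-\z_*\|_2^2-\|\z_t-\z_*\|_2^2\right].
\]
Summing over $t=1,\dots,T$, the right-hand side telescopes. Letting $T\to\infty$ then yields $\sum_{t\geq1}\eta_{t-1}^2\|F(\z_t)+\bm\xi_t\|_2^2\leq\frac{10}{1-\theta}\|\z_0-\z_*\|_2^2<\infty$.

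Second, remove the stepsize weights. The only step that genuinely uses local (rather than global) Lipschitz continuity is obtaining a uniform positive lower bound on the stepsizes. This is exactly \cref{lem:finite-backtracking}: there is $\bar\eta>0$ with $\eta_t\geq\bar\eta$ for all $t$. It follows because the iterates $\z_t$ and $\w_t$ remain in a bounded set, on which $F$ is Lipschitz, and backtracking halts after finitely many steps overall. Consequently
\[
  \sum_{t\geq1}\|F(\z_t)+\bm\xi_t\|_2^2\leq\bar\eta^{-2}\sum_{t\geq1}\eta_{t-1}^2\|F(\z_t)+\bm\xi_t\|_2^2<\infty.
\]

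Third, obtain the last-iterate rate. Since $\eta_t\hat L_t\leq1$ for every $t$, \cref{lem:residual-nonincreasing} shows that the sequence $\|F(\z_t)+\bm\xi_t\|_2^2$ is non-increasing. It is also summable, so \cref{lem:olivier} gives $\|F(\z_T)+\bm\xi_T\|_2^2=o(1/T)$, that is, $\|F(\z_T)+\bm\xi_T\|_2=o(1/\sqrt T)$.

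Olivier's theorem is stated for positive sequences. If some residual is zero, then $\z_t$ is a solution, and monotonicity forces the residual to stay zero from then on, so the conclusion holds trivially in that case.
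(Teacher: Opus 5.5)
Your proposal is correct and follows the paper's proof essentially step for step: the backtracking conditions give $\eta_tL_t\leq\frac{\theta+1}{2}$ and $\eta_t\hat L_t\leq 1$ for every $t$, the bound of \cref{lem:residual-bound} is telescoped, \cref{lem:finite-backtracking} supplies the uniform lower bound $\eta_t\geq\bar\eta$, and \cref{lem:residual-nonincreasing} together with \cref{lem:olivier} yields the $o(1/\sqrt{T})$ rate. Your bookkeeping is slightly cleaner than the paper's. The paper writes $\eta_t^2$ instead of $\eta_{t-1}^2$ on the left-hand side, and after deriving $\frac{10}{1-\theta}$ it switches to the constant $\frac{6+(\theta+1)^2}{1-\theta}$ carried over from \cref{prop:extragradient-adaptive}.
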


\begin{proof}
  Since the backtracking line search step in \cref{alg:extragradient-backtracking} guarantees that $\eta_tL_t\leq\tfrac{\theta+1}{2}$ and $\eta_t\hat L_t\leq1$ for all $t\in\N$, 
  by \cref{lem:residual-bound},  
  \begin{align*}
    \eta_t^2\|F(\z_t) + \bm\xi_t\|_2^2 
    &\leq \frac{3+2\eta_{t-1}^2\hat L_{t-1}^2}{1-\eta_{t-1}L_{t-1}} \left[\|\z_{t-1}-\z_*\|_2^2 - \|\z_t-\z_*\|_2^2 \right] \\
    &\leq \frac{{10}}{1-\theta} \left[ \|\z_{t-1}-\z_*\|_2^2 - \|\z_t-\z_*\|_2^2 \right]. 
  \end{align*}
  Summing up the above inequality for $t=1,\dots,T$ and noting that the right-hand side telescopes, we have
  \begin{align*}
    \sum_{t\in[T]}\eta_t^2\|F(\z_t) + \bm\xi_t\|_2^2 \leq \frac{6 + (\theta+1)^2}{1-\theta}\|\z_{0}-\z_*\|_2^2. 
  \end{align*}
  Recall from \cref{lem:finite-backtracking} that $\eta_t\geq\bar\eta>0$ for all $t\in\N$. Thus, $\sum_{t\geq1}\|F(\z_t)+\bm\xi_t\|_2^2 < +\infty$. By \cref{lem:residual-nonincreasing,lem:olivier}, we conclude that $\|F(\z_T) + \bm\xi_T\|_2^2 = o(1/T)$. 
\end{proof}

While \cref{alg:extragradient-backtracking} provides a robust approach for non-monotone adaptive stepsizes under local Lipschitz continuity, in \cref{alg:extragradient-backtracking-old} we also consider a monotone variant by using standard backtracking line search. This variant serves as a baseline for our numerical experiments. A detailed description of \cref{alg:extragradient-backtracking-old}, along with its convergence analysis and a stepsize increase trick used in our experiments, is provided in \cref{sec:alg-backtracking-old}.

\section{Numerical Results}
\label{sec:numerical}

{We test our algorithms on four different problem classes:  bilinear matrix game, LASSO problem, a group fairness classification problem, and a state-of-the-art relaxation for the maximum entropy sampling problem.} 
All experiments are coded in Python 3.9 and ran on a Linux server with a 3-GHz Intel Xeon Gold 5317 processor with 12 cores and 128 GB of RAM. The code {for the implementation of the algorithms tested} 
is available at \url{https://github.com/joyshen07/pf-ne-eg}. 

In addition to our proposed algorithms, we also implement and {compare} the standard extragradient algorithm (\texttt{EG}) with fixed stepsize, as well as \texttt{Universal MP} \citep{bach_universal_2019}, \texttt{Adaptive MP} \citep{antonakopoulos_adaptive_2019}, \texttt{AdaProx} \citep{antonakopoulos_adaptive_2021}, \texttt{AdaPEG} \citep{ene_adaptive_2022}, \texttt{aGRAAL} \citep{malitsky_golden_2020}, and \texttt{Adapt EG} \citep{antonakopoulos_extra-gradient_2024} whenever applicable (see \cref{tab:adaptive-literature}). 
To save space, we denote \cref{alg:extragradient-backtracking} as \texttt{PF-NE-EG AdaBt}, and \cref{alg:extragradient-backtracking-old} as \texttt{PF-NE-EG Bt}, where ``Bt'' stands for backtracking.
Throughout the experiments, we set $\lambda_t=1+\frac1{\log(t+2)}$ for \cref{alg:extragradient-adaptive}, $\rho=0.9$ for \cref{alg:extragradient-backtracking,alg:extragradient-backtracking-old}, and $\theta=0.9$ for {all variants of \texttt{PF-NE-EG}}. We adopt the stepsize increase trick for \cref{alg:extragradient-backtracking-old} mentioned in \cref{rem:stepsize-increase-trick} to ensure robustness. In addition, we take $D$ and $G_0$ for \texttt{Universal MP} exactly according to the best choice suggested by \cite{bach_universal_2019}, $\theta=0.9$ for \texttt{Adaptive MP}, $\phi=\frac{\sqrt{5}+1}{2}$ and $\bar\lambda=\eta_0$ to be the initial stepsize for \texttt{aGRAAL}, and $\eta=1$ for \texttt{AdaPEG}. All algorithms are initialized with the same stepsize and initial points in each experiment, with the exception of  standard \texttt{EG}. For problem instances where the Lipschitz constant $L$ is tractable, we set the stepsize for standard \texttt{EG} to $\eta=0.9/L<1/L$; for those where $L$ is unknown, we set the stepsize of  standard \texttt{EG} to be half the stepsize used for the adaptive algorithms, to compensate for its lack of adaptivity. Note that there is no theoretical guarantees supporting \texttt{EG} in the latter case, and we adopt the heuristic stepsize solely for experimental purposes. See the subsections below for further details specific to each problem class.

For the convergence plots, we run the algorithms for a fixed number of iteration, or until a target precision of $10^{-6}$ is reached, whichever comes first. In the solution time tables, we report the solution time (seconds) or iteration counts of the algorithms that reach a target precision $\varepsilon$ within the predefined runtime limit.

\subsection{Bilinear matrix game}
\label{sec:minimax-game}

In this subsection, we study the matrix game problem given by (see \citep{nemirovski_prox-method_2004}) 
\begin{align*}
  \min_{\x\in\Delta_d}\max_{\y\in\Delta_d}\x^\top\bA\y, 
\end{align*}
where $\Delta_d$ is the standard simplex in $\R^d$. 
Matrix games are a standard benchmark for evaluating algorithms for convex-concave SPP, especially extragradient-type methods. The problem corresponds to a zero-sum game in which players choose strategies $x$ and $y$ from the probability simplex, and the goal is to compute a Nash equilibrium by solving the bilinear min-max formulation above. This problem is well suited as a starting point for comparing the performance of SPP algorithms due to the simplicity of the bilinear form and the simplex domain. 

\paragraph{{Problem data.}}
Following the experimental setup in \citep{nemirovski_prox-method_2004}, we consider square matrices $\bA\in\R^{d\times d}$, where each entry $A_{ij}$ is selected to be nonzero independently with a pre-specified probability $\kappa\in(0,1)$, then the values are sampled from the uniform distribution on $[-1,1]$ for the selected nonzero entries. We examine  three sets of instances: $(d, \kappa)=(100, 1.0), (500, 0.2)$, and $(1000, 0.1)$. 

\paragraph{Implementation details.} 
For the algorithms that require knowledge of problem parameters, we take the domain diameter $D:=\sqrt{2}$, and Lipschitz constant $L:=\|\bA\|_2$.
All algorithms are initialized at $\x=\y=\frac1d\ones$ with initial stepsize $\eta_0=0.5$ except for  standard \texttt{EG} with constant stepsize $\eta=0.9/L$. In addition, for the instance $(d,\kappa)=(100,1.0)$, we also vary the initial stepsize to a smaller value $\eta_0=0.02$ to investigate the algorithms' sensitivity to it, including for the standard \texttt{EG}. 
For matrix games, the saddle point gap at $\z=(\x,\y)$ can be computed easily by
\begin{align*}
  G_{\Delta_d\times\Delta_d}(\z) = \max_{i\in[d]}(\bA^\top\x)_i - \min_{i\in[d]}(\bA\y)_i . 
\end{align*}
{Thus, for this problem class, we report this convergence metric for all algorithms tested}.

\paragraph{Performance comparison.}
\cref{fig:matrix-game,tab:matrix-game} compare the convergence behaviors of different algorithms for solving the matrix game instances. 
{\cref{fig:matrix-game} demonstrates} that there is a clear distinction between the convergence of ergodic and last-iterate algorithms, with the latter achieving significantly higher precision. Moreover,  \cref{alg:extragradient-adaptive,alg:extragradient-backtracking,alg:extragradient-backtracking-old} maintain consistently stable and superior performance. In terms of the last-iterate algorithms, while standard \texttt{EG} and \texttt{Adapt EG} appear as the closest competitors to our algorithms, both are  subject to major limitations. The competitive results of standard \texttt{EG} are due to the use of an optimal stepsize computed from the actual Lipschitz constant, which is rarely available in practical applications. In contrast, our proposed algorithms match its performance without requiring any problem-specific constants. Furthermore, while \texttt{Adapt EG} occasionally reaches a target precision of $\varepsilon=10^{-5}$ faster, it suffers from a significantly slower initial phase and erratic, non-monotonic behavior. As shown in the $(d,\kappa)=(100, 1.0)$ instance, \texttt{Adapt EG} is highly sensitive to initial stepsize selection: a large initial stepsize leads to the largest initial error gap, while a small initial stepsize causes it to  {have a similar convergence behavior as the ergodic algorithms}. This dependency of empirical behavior of \texttt{Adapt EG} on carefully chosen initial stepsize contradicts the fundamental goal of parameter-free design, and is in contrast to the robustness of \texttt{PF-NE-EG} algorithms. 
{In addition, based on the computation times reported in \cref{tab:matrix-game}, we observe that algorithms with last-iterate guarantees are significantly faster, and \cref{alg:extragradient-adaptive,alg:extragradient-backtracking} being the best and often beating the performance of standard \texttt{EG} utilizing the optimum stepsize.} 

\begin{figure}[htbp]
  \centering
  \includegraphics[width=.49\textwidth, trim=.8cm .8cm 0 0 0, clip]{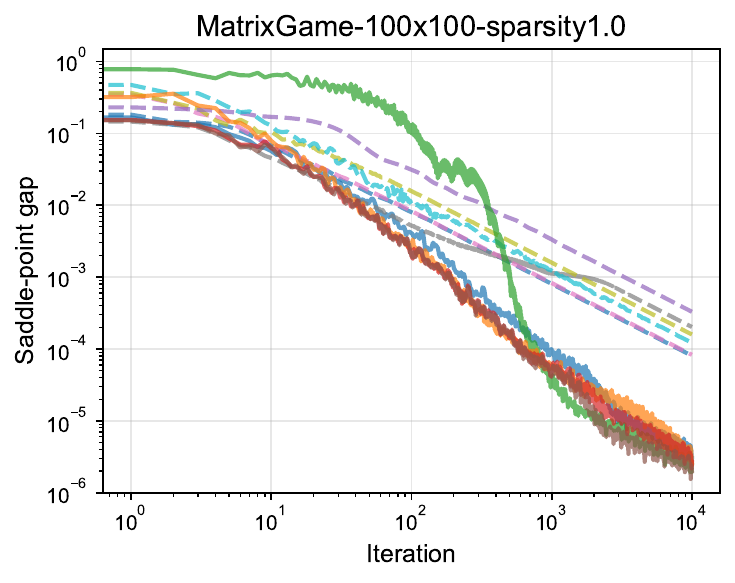}
  \includegraphics[width=.49\textwidth, trim=.8cm .8cm 0 0 0, clip]{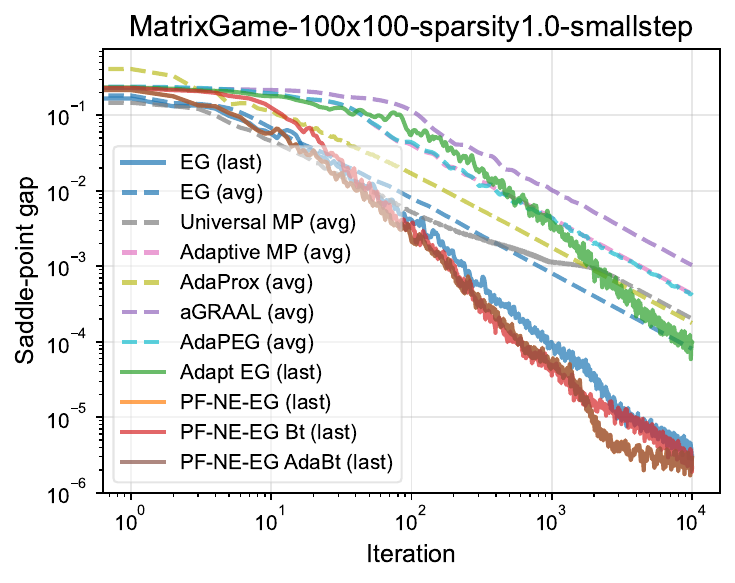}
  \includegraphics[width=.49\textwidth, trim=.8cm 0 0 0 0, clip]{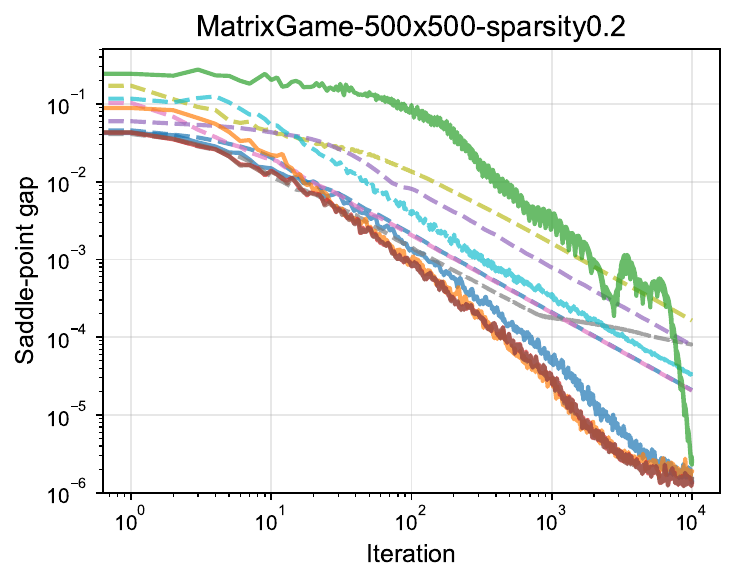}
  \includegraphics[width=.49\textwidth, trim=.8cm 0 0 0 0, clip]{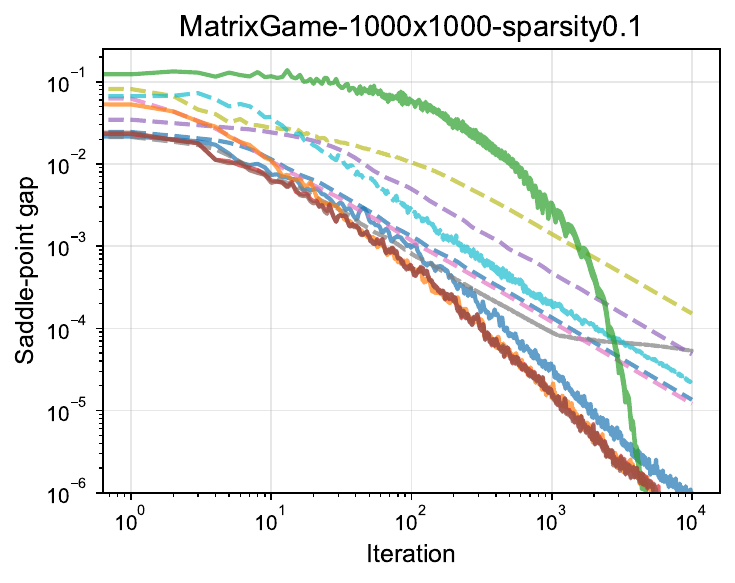}
  \caption{Convergence comparison of different algorithms on bilinear matrix game instances. Initial stepsizes are all set to $\eta_0=0.5$, except for the top right where $\eta_0=0.02$.     }
  \label{fig:matrix-game}
\end{figure}

\begin{table}[htbp]
\centering
\begin{tabular}{lSSSS}
\toprule
{Algorithm} & {(100, 1.0)} & {(100, 1.0)\textsuperscript{*}} & {(500, 0.2)} & {(1000, 0.1)} \\
\midrule
EG            & 0.24 & 9.01 & 0.88 & 0.63 \\
EG (avg)      & 6.13 & 32.60 & 6.95 & 5.03 \\
Universal MP  & 18.87 & 18.70 & 105.46 & 64.34 \\
Adaptive MP   & 7.88  & 40.35 & 8.45 & 5.09 \\
AdaProx       & 13.59 & 14.93 & 74.41 & 60.51 \\
aGRAAL        & 20.56 & 65.51 & 18.86 & 13.20 \\
AdaPEG        & 6.55  & 21.60 & 7.06 & 5.31 \\
Adapt EG      & 0.16  & 9.45  & 3.42 & 1.37 \\
PF-NE-EG      & 0.52  & 0.21  & 0.61 & 0.82 \\
PF-NE-EG Bt   & 0.61  & 0.61  & 1.31 & 1.08 \\
PF-NE-EG AdaBt& 0.23  & 0.22  & 0.68 & 0.55 \\
\bottomrule
\end{tabular}
\caption{Time (in seconds) to reach $\varepsilon = 10^{-5}$ for bilinear matrix game instances. Initial stepsizes are all set to $\eta_0=0.5$, except for the second column where $\eta_0=0.02$. }
\label{tab:matrix-game}
\end{table}

\subsection{LASSO}

Next, we consider the Least Absolute Shrinkage and Selection Operator (LASSO) problem, widely used in compressive sensing and high-dimensional statistics. As a nonsmooth convex minimization problem, it can be reformulated into a smooth convex-concave saddle point problem and used for testing saddle point algorithms \citep{liu_new_2026}. 

The LASSO problem is defined as
\[
\min_{\x \in \R^n} \frac{1}{2} \|\bA\x - \b\|_2^2 + \lambda \|\x\|_1
\]
where $\bA \in \R^{m \times n}$, $\b \in \R^m$, and $\lambda > 0$ is the regularization parameter. By the dual representation of the $\ell_1$-norm, i.e., $\lambda \|\x\|_1 = \max_{\|\y\|_\infty \leq \lambda} \langle \y, \x \rangle$, we have
\[
\min_{\x \in \R^n} \max_{\|\y\|_\infty \leq \lambda} \set{ \phi(\x,\y) \coloneq \frac{1}{2} \|\bA\x - \b\|_2^2 + \langle \y, \x \rangle }.
\]
Note that {while the primal domain is simply $\R^n$,} the dual domain is constrained, {bounded}, and easy to perform projection onto. 
The operator associated with $\phi(\x,\y)$ is
\[
F(\z) = \begin{pmatrix} \nabla_\x \phi(\x, \y) \\ -\nabla_\y \phi(\x, \y) \end{pmatrix}
= \begin{pmatrix} \bA^\top(\bA\x - \b) + \y \\ -\x \end{pmatrix},
\]
which is linear, and therefore Lipschitz continuous over the entire domain.

\paragraph{{Problem data.}}
We consider an underdetermined system where $\bA \in \R^{m \times n}$ with $m < n$. The matrix $\bA$ is generated by sampling entries from a standard normal distribution, followed by a column-normalization step such that $\|a_j\|_2 = 1$ for all $j=1, \dots, n$. This normalization ensures that the local curvature is not dominated by a single column's scale. The vector $\b$ is constructed as $\b = \bA\x_{\text{true}} + \bm\epsilon$ where $\bm\epsilon\in\R^m$ is an additive Gaussian noise with standard deviation $\sigma = 0.01$. The ground-truth $\x_{\text{true}}$ is generated to be $s$-sparse, with non-zero entries sampled from a Gaussian distribution. 
We take $(m, n, s) = (250, 1000, 0.5), (500, 5000, 0.1)$, and set the regularization parameter $\lambda = 1$ to {generate two} different instances. 

\paragraph{Implementation details.}
All algorithms are initialized at $\x=\y = \mathbf{0}$, with initial stepsize $\eta_0=0.1$, except for \texttt{EG} with $\eta=0.05$. Since the primal domain is unbounded, the \texttt{Universal MP} \citep{bach_universal_2019} does not apply and is not tested for LASSO. 
To compare the algorithm performance, {as the computation needed for saddle point gap is rather expensive for this problem, we instead} compute and monitor the natural residual $R_{0.01}(\z_t)$ for those with last-iterate convergence guarantees, or $R_{0.01}(\bar\z_t)$ for those with ergodic convergence, where $\bar\z_t$ denotes the (weighted) average of iterates. 

\paragraph{Performance comparison.}
Results are shown in \cref{fig:lasso,tab:lasso}. 
In this problem class, \cref{alg:extragradient-adaptive,alg:extragradient-backtracking,alg:extragradient-backtracking-old} exhibit a more pronounced advantage. They convergence significantly faster, reaching the target precision of $\varepsilon=10^{-6}$ in substantially fewer iterations as well as solution time than all other algorithms. The performance difference between last-iterate and ergodic algorithms remains clear, and the  last-iterate algorithms reach $\varepsilon=10^{-6}$ within $60$ seconds. Meanwhile, the gap between \texttt{PF-NE-EG} algorithms and their closest last-iterate competitors, standard \texttt{EG} and \texttt{Adapt EG}, has widened. Specifically, \cref{alg:extragradient-adaptive} reaches the precision threshold over {four} times faster than \texttt{Adapt EG} and over fourteen times faster than standard \texttt{EG} in terms of solution time, and the advantage is even more pronounced in the high-dimensional sparse instance. The additional computational cost of \cref{alg:extragradient-backtracking-old} compared to \cref{alg:extragradient-adaptive,alg:extragradient-backtracking} is mainly due to the use of the stepsize increase trick (see \cref{rem:stepsize-increase-trick}), which leads to the operator evaluations (gradient computations) to double. Even so, the solution time it takes to convergence remains competitive among {other algorithms from the literature}. These results validate that the theoretical efficiency of our proposed algorithms translates into significant practical gains.

\begin{figure}[htbp]
  \centering
  \includegraphics[width=.49\textwidth, trim=.8cm 0 0 0 0, clip]{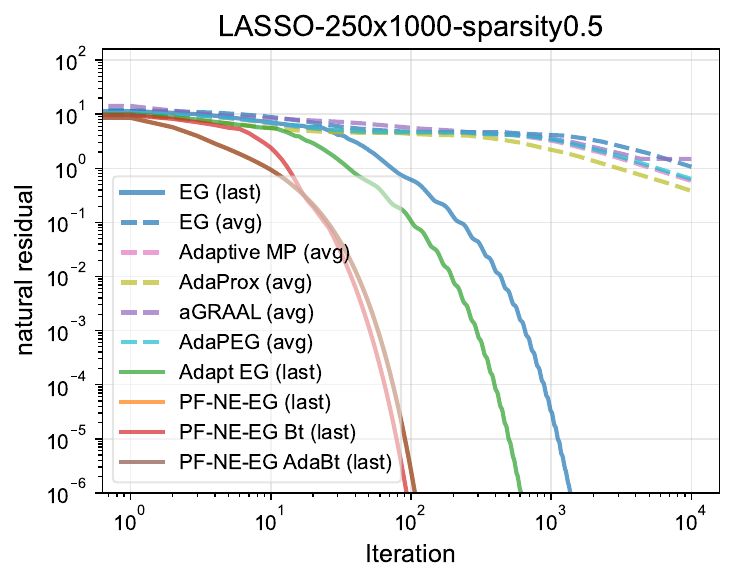}
  \includegraphics[width=.49\textwidth, trim=.8cm 0 0 0 0, clip]{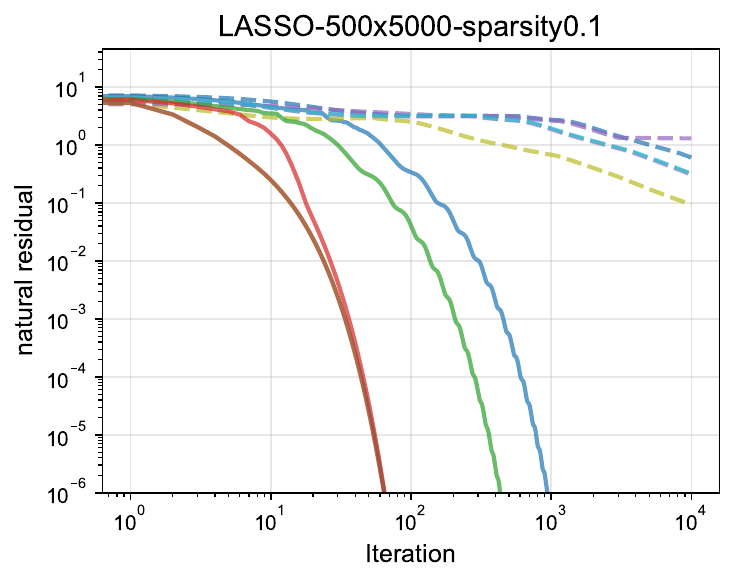}
  \caption{Convergence comparison of different algorithms for LASSO instances.}
  \label{fig:lasso}
\end{figure}

\begin{table}[htbp]
\centering
\begin{tabular}{lSS}
\toprule
{Algorithm} & {$(250,1000,0.5)$} & {$(500,5000,0.1)$} \\
\midrule
EG          & 0.42  & 1.88 \\
Adapt EG    & 0.13  & 0.51 \\
PF-NE-EG    & 0.03  & 0.10 \\
PF-NE-EG Bt & 0.06  & 0.21 \\
PF-NE-EG AdaBt& 0.03& 0.10 \\
\bottomrule
\end{tabular}
\caption{Time (in seconds) to reach $\varepsilon = 10^{-6}$ for LASSO instances.}
\label{tab:lasso}
\end{table}

\subsection{Group fairness classification}

In this part, we consider the problem of training a fair binary classifier across $m$ distinct demographic groups via the minimax fairness model \citep{diana_minimax_2021}. Modern machine learning models often achieve high overall accuracy at the expense of specific subgroups, leading to biased outcomes in sensitive domains like hiring or credit scoring. The minimax fairness model addresses this by minimizing the worst-case error across all groups, effectively prioritizing the most disadvantaged subpopulation.

Let $\{ (\bX_i, \y_i) \}_{i=1}^m$ denote the datasets for each group, where the features are represented by $\bX_i=(\x_{i1}^\top,\dots,\x_{in_i}^\top)^\top\in \R^{n_i \times d}$ and $\y_i \in \{0, 1\}^{n_i}$ are the labels. We seek to find a model parameter $\bm\theta \in \R^d$ that minimizes the maximum risk across all groups, formulated as the following saddle point problem
\[
  {\min_{\bm\theta \in \R^d}  \max_{i\in[m]} \set{\ell_i(\bm\theta)} =} \min_{\bm\theta \in \R^d} \max_{\q \in \Delta_m} \set{ \phi(\bm\theta, \q) \coloneq \sum_{i=1}^m q_i \ell_i(\bm\theta) }, 
\]
where $\ell_i(\bm\theta)$ represents the exponential loss for group $i$:
\[
  \ell_i(\bm\theta) = \frac{1}{n_i} \sum_{j=1}^{n_i} \exp(-y_{ij}\bm\theta^\top\x_{ij}).
\]
This is a convex-concave problem with nonlinear coupling between $\bm\theta$ and $\q$, and the primal domain is unbounded. The gradients are
\begin{align*}
  \nabla_{\bm\theta} \phi &= \sum_{i=1}^m q_i \nabla \ell_i(\bm\theta), \\ 
  \nabla_{\q} \phi &= [\ell_1(\bm\theta), \ell_2(\bm\theta), \dots, \ell_m(\bm\theta)]^\top,
\end{align*}
where
\[
\nabla \ell_i(\bm\theta) = \frac{1}{n_i} \sum_{j=1}^{n_i} \left(-y_{ij}\exp(-y_{ij}\bm\theta^\top\x_{ij})\right)\x_{ij}. 
\]
The monotone operator $F=(-\nabla_{\bm\theta}\phi,-\nabla_\q\phi)$ is locally Lipschitz continuous due to its analyticity. However, it is not globally Lipschitz continuous over the domain as it grows exponentially fast w.r.t.\ $\bm\theta$. 

\paragraph{{Problem data.}}
We generate a synthetic dataset with $m$ groups using the \sloppy  \texttt{make\_classification} function from the Python package \texttt{sklearn.datasets}. For each group $i$, we generate an equal number of $n_i=n$ samples with $d$ features . To simulate heterogeneous groups, we vary the prior probability {$\P(y=1)=0.5+0.1\cdot(i/m)$ for each group $i\in[m]$}. This forces the dual variable $\q$ to prioritize groups where the minority class is underrepresented and harder to classify. We also add group-specific label noise by {randomizing a percentage of $10\%\cdot(i/m)^2$} of labels in group $i$. 
We generated instances with $(m, n, d)=(10, 200, 100), (20, 200, 50)$. 

\paragraph{Implementation details.}
The primal variable is initialized to $\bm\theta=\mathbf{0}$, and the dual variable $\q$ is initialized as center of the simplex, $\q = \frac{1}{m}\ones$. The initial stepsize is set to $\eta_0=0.01$. We implement all the algorithms in \cref{tab:adaptive-literature} regardless of whether their assumptions are satisfied by the group fairness classification problem{. However, many algorithms encounter numerical overflow issues in the experiment, and we omit the results for those.}
To compare the algorithm performance, we compute and monitor the natural residual $R_{0.01}(\z_t)$ for those with last-iterate convergence guarantees, or $R_{0.01}(\bar\z_t)$ for those with ergodic convergence, where $\bar\z_t$ denotes the (weighted) average of iterates. 

\paragraph{Performance comparison.}
Results are shown in \cref{fig:fairness,tab:fairness}. 
These results highlight the robustness of \cref{alg:extragradient-adaptive,alg:extragradient-backtracking,alg:extragradient-backtracking-old} in highly nonlinear problems without global Lipschitz continuous operators. Notably,  \texttt{aGRAAL} is the only algorithm from the  literature that does not incur numerical overflow, which is in accordance with its theoretical guarantee under local Lipschitz assumption. However, its performance is substantially weaker than our proposed algorithms, and it fails to reach high precision within a practical timeframe. In contrast, \cref{alg:extragradient-adaptive,alg:extragradient-backtracking,alg:extragradient-backtracking-old} exhibit fast convergence behaviors after an initial phase, achieving a target precision of $\varepsilon=10^{-6}$ by roughly $10^4$ iterations. We also note that \cref{alg:extragradient-backtracking-old} again takes roughly twice as much time as \cref{alg:extragradient-adaptive}, as expected due to the stepsize increase trick (\cref{rem:stepsize-increase-trick}), whereas \cref{alg:extragradient-backtracking} achieves a comparable solution time.

\begin{figure}[htbp]
  \centering
  \includegraphics[width=.49\textwidth, trim=.8cm 0 0 0 0, clip]{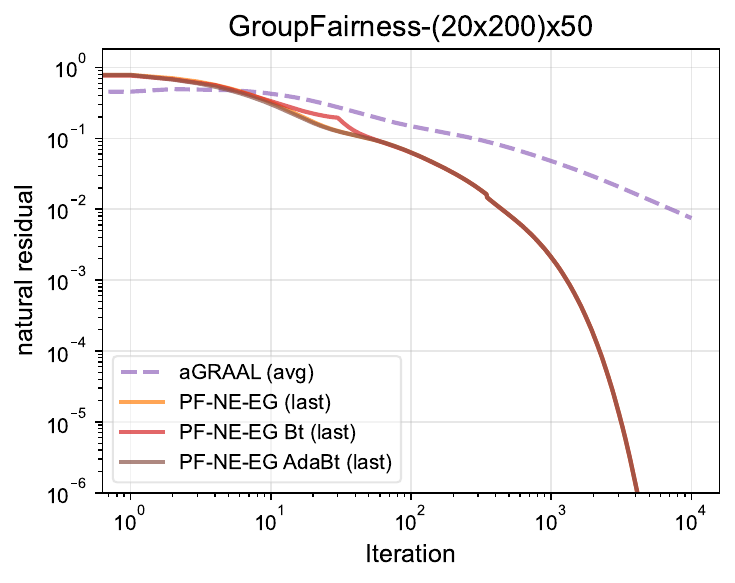}
  \includegraphics[width=.49\textwidth, trim=.8cm 0 0 0 0, clip]{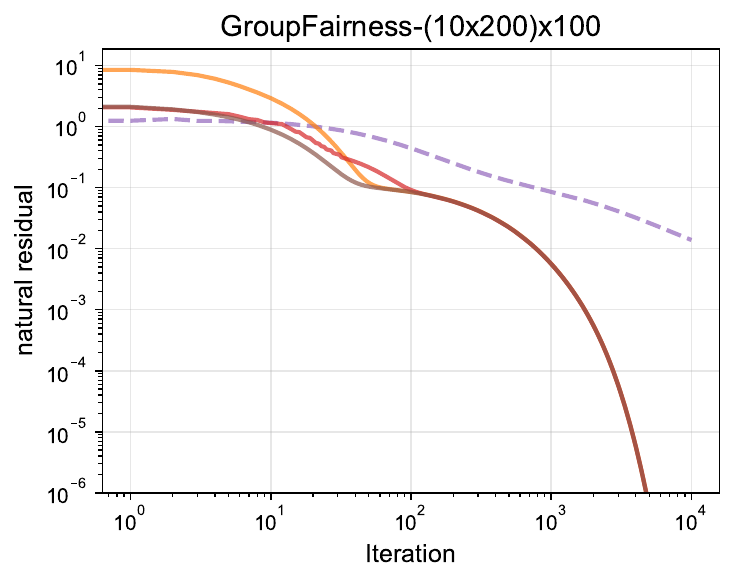}
  \caption{Convergence comparison of different algorithms for solving group fairness classification. }
  \label{fig:fairness}
\end{figure}

\begin{table}[htbp]
\centering
\begin{tabular}{lSS}
\toprule
Algorithm & {$(20,200,50)$} & {$(10,200,100)$} \\
\midrule
PF-NE-EG       & 5.01 & 3.62  \\
PF-NE-EG Bt    & 9.76 & 7.21 \\
PF-NE-EG AdaBt & 4.86 & 3.67  \\
\bottomrule
\end{tabular}
\caption{Time (in seconds) to reach $\varepsilon = 10^{-6}$ for group fairness classification.}
\label{tab:fairness}
\end{table}

\subsection{Maximum-Entropy Sampling Problem (MESP)}
\label{sec:mesp}

Finally, we test the effectiveness of the proposed algorithms on relaxations of the \emph{maximum-entropy sampling problem} (MESP), a well-known NP-hard problem arising in optimal experimental design. {See \cite{fampa_maximum-entropy_2022,fampa_recent_2026} for recent comprehensive treatments.}
Given a positive semidefinite covariance matrix $\bC \in \S^d_{+}$ and a subset size $s \leq \rank(\bC)$, MESP seeks a subset $S \subseteq [d]$ with $|S| = s$ that maximizes the information of the corresponding principal submatrix:
\[
  \max_{S \subseteq [d],\, |S| = s} \log\det(\bC_{S,S}).
\]
This criterion coincides with the classical D-optimal design objective, which aims to maximize the determinant of the information matrix associated with the selected variables.

A central tool for addressing the MESP is the use of convex relaxations. Among these, the linx relaxation proposed by \cite{anstreicher_efficient_2020} provides one of the state-of-the-art relaxation bounds. Building on this, various enhancement techniques have been developed to further strengthen the relaxation bound quality. A recent advancement is the double-scaling approach, which, when applied to the linx relaxation, results in the following convex-concave saddle point formulation \citep{shen_majorization_2026}:
\begin{align*}
  \begin{split}
  & \min_{\x\in\cX}\max_{\bm\rho,\bm\omega\in\R^d}\left\{\phi(\x,\bm\rho,\bm\omega) \coloneq \frac12\langle\x,\bm\rho\rangle + \frac12\langle\ones-\x,\bm\omega\rangle \right. \\
  & \left.-\frac12\log\det(\bC\Diag(\exp(\bm\rho))\Diag(\x)\bC+\Diag(\exp(\bm\omega)){\Diag(\ones-\x)} ) \right\}, 
  \end{split}
\end{align*}
{where $\cX=\set{\x\in[0,1]^d:\, \ones^\top\x=s}$. Here, $\x$ models the principal submatrix selection, and the variables $\bm\rho,\bm\omega$ model the scaling parameters that are aimed at further strengthening the linx relaxation.}

\begin{remark}\label{rem:local-lip}
  The partial gradient $\nabla_\x\phi$ is neither bounded nor Lipschitz continuous over its domain. This can be illustrated by considering a diagonal matrix $\bC=\Diag(c_1,\dots,c_d)$: 
  \begin{align*}
    \frac{\partial\phi}{\partial x_i}(\x,\bm\rho,\bm\omega)
    &= -\frac12\cdot\frac{c_i^2\exp(\rho_i) - \exp(\omega_i)}{c_i^2\exp(\rho_i)x_i+\exp(\omega_i)(1-x_i)} + \frac12\rho_i - \frac12\omega_i \\
    &= -\frac12\cdot\frac{c_i^2\exp(\rho_i-\omega_i)-1}{c_i^2\exp(\rho_i-\omega_i)x_i+(1-x_i)} + \frac12(\rho_i-\omega_i). 
  \end{align*}
  When $x_i=1$, as $\rho_i-\omega_i\to-\infty$, {$ \frac{\partial\phi}{\partial x_i}(\x,\bm\rho,\bm\omega)$} grows exponentially fast, and therefore is neither bounded nor Lipschitz continuous w.r.t.\ $\bm\omega,\bm\rho\in\R^d$. On the other hand, it is locally Lipschitz continuous over its domain, as it is continuously differentiable. 
  \qed
\end{remark}

{Based on \cref{rem:local-lip}, the operator associated with this problem is neither bounded nor Lipschitz continuous. As a result, other than our algorithms, only \texttt{aGRAAL} comes with a provable-yet-ergodic convergence guarantee for this setting;  and all others do not admit any convergence guarantees and their stepsize selection is completely heuristic here.} Notably, even for the simpler g-scaled linx variant, where a convex-concave structure was already known to be present, prior work \citep{chen_generalized_2024} relied on a general-purpose nonconvex solver.

\paragraph{Problem data.} 
We consider the benchmark covariance matrix $\bC\in\R^{d\times d}$ with $d=124$ drawn from standard datasets used in environmental monitoring network redesign studies \citep{hoffman_new_2001}. This matrix has been widely used in the literature as a standard test instance for the MESP \citep{ko_exact_1995,lee_constrained_1998,anstreicher_using_1999,hoffman_new_2001,lee_linear_2003,anstreicher_masked_2004,burer_solving_2007,anstreicher_maximum-entropy_2018,li_best_2023,chen_generalized_2024}. For this covariance matrix, we generate a set of test instances by varying the subset size parameter $s=30,40,\dots,100$. 

\paragraph{Implementation details.}
All algorithms are initialized by setting $\x=\frac{s}{d}\ones$, i.e., the center of its domain $\cX$, and all scaling parameters are initialized at $0$, i.e., $\bm\rho=\bm\omega=\bf0$. The initial stepsizes are set to $\eta_0=0.1$, except for standard  \texttt{EG} with $\eta=0.05$. 
For this problem, the Euclidean projection onto the primal domain $\cX$ can be computed efficiently according to \cref{lem:project}. 
\begin{lemma}[{\cite{salem_no-regret_2023}}]\label{lem:project}
  Let $\cX=\set{\x\in[0,1]^d:\ones^\top\x=s}$ be the capped simplex. 
    In the Euclidean setting, the projection onto $\cX$ can be computed within $O(d^2)$ operations, and the domain diameter is $\Omega:=\frac12(s-s^2/d)$. 
\end{lemma}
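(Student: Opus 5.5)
The statement has two parts: the cost of the Euclidean projection onto $\cX$, and the value of $\Omega$. I treat them separately. For the second part I read $\Omega$ as the ``Bregman diameter'' with respect to the center $\x_c=\frac{s}{d}\ones$, i.e.\ $\Omega=\max_{\x\in\cX}\frac12\|\x-\x_c\|_2^2$, since this is the normalization used by \texttt{Universal MP}. I also assume $s$ is an integer.

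For the projection I would first write the KKT conditions of $\min_{\x}\frac12\|\x-\y\|_2^2$ over $\x\in[0,1]^d$ with $\ones^\top\x=s$. Dualizing only the equality constraint with multiplier $\tau\in\R$, the box-constrained minimization separates coordinatewise. It gives $x_i(\tau)=\min\{1,\max\{0,y_i-\tau\}\}$.

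Since $\cX$ is nonempty ($0\le s\le d$) and the objective is strongly convex, there is a unique projection and strong duality holds. So it suffices to find $\tau^*$ with $g(\tau^*)\coloneqq\sum_i x_i(\tau^*)=s$. The function $g$ is continuous and nonincreasing in $\tau$. It is piecewise linear with at most $2d$ breakpoints $\{y_i\}\cup\{y_i-1\}$. It equals $d$ for $\tau\le\min_i y_i-1$ and $0$ for $\tau\ge\max_i y_i$, so a root exists by the intermediate value theorem.

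The algorithm evaluates $g$ at each of the $2d$ breakpoints, at $O(d)$ cost each, for $O(d^2)$ in total. It then locates two consecutive breakpoints $\tau_k\le\tau_{k+1}$ with $g(\tau_k)\ge s\ge g(\tau_{k+1})$. On that interval $g$ is affine, with slope $-|\{i: 0<y_i-\tau<1\}|$, so $\tau^*$ follows by one linear interpolation. Sorting would improve this to $O(d\log d)$, but $O(d^2)$ is all that is claimed.

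For the diameter I would use that $\frac12\|\x-\x_c\|_2^2$ is convex, so its maximum over the polytope $\cX$ is attained at a vertex. For integer $s$, the vertices of the capped simplex are the $0/1$ vectors with exactly $s$ ones. This follows from total unimodularity of the single constraint row together with the box, or directly: at a vertex, at most one coordinate is fractional, and the sum constraint then forces it to be integral.

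At such a vertex we have $\|\x\|_2^2=s$ and $\langle\x,\x_c\rangle=s^2/d$. Also $\|\x_c\|_2^2=s^2/d$. Hence $\|\x-\x_c\|_2^2=s-2s^2/d+s^2/d=s-s^2/d$, which gives $\Omega=\frac12(s-s^2/d)$.

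The main obstacle is the vertex characterization used in the diameter computation, together with being precise about which notion of ``diameter'' the statement means. The projection part is routine once the one-dimensional monotone root-finding reformulation is written down.
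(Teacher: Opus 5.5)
The paper gives no proof of this lemma; it simply imports it from \cite{salem_no-regret_2023}. Your argument therefore works as a self-contained justification rather than as an alternative to an in-paper proof, and it is correct. The multiplier reduction to the clipped coordinates gives the projection. So does root-finding for the nonincreasing piecewise-linear $g$ at its $2d$ breakpoints, at $O(d)$ per evaluation, followed by one interpolation. The vertex argument for integer $s$ gives the value of $\Omega$.

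One point is worth making explicit, because it shows your reading of $\Omega$ is the only sensible one. Since $\ones^\top\x=s$ on $\cX$, you have $\frac12\|\x-\frac{s}{d}\ones\|_2^2=\frac12\|\x\|_2^2-\frac{s^2}{2d}$ there. Also $\frac{s^2}{2d}=\min_{\cX}\frac12\|\x\|_2^2$, by Cauchy--Schwarz, with equality at $\frac{s}{d}\ones\in\cX$. Hence your ``Bregman radius about the center'' coincides exactly with the $\omega$-size $\max_{\cX}\frac12\|\x\|_2^2-\min_{\cX}\frac12\|\x\|_2^2$ used in mirror-prox-type stepsizes. By contrast, the literal Euclidean diameter of $\cX$ is $\sqrt{2\min\{s,d-s\}}$, so the word ``diameter'' in the statement cannot be taken literally.

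Your integrality assumption on $s$ is genuinely needed, and it holds for MESP. For fractional $s$, the vertices carry one coordinate equal to $s-\lfloor s\rfloor$. Then $\max_{\cX}\|\x\|_2^2=\lfloor s\rfloor+(s-\lfloor s\rfloor)^2<s$, and the stated formula would overestimate.

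Two cosmetic simplifications are available. First, sufficiency of a root $\tau^*$ needs no appeal to strong duality. The point $\x(\tau^*)$ is feasible and minimizes the Lagrangian over the box, so $\frac12\|\x-\y\|_2^2=\frac12\|\x-\y\|_2^2+\tau^*(\ones^\top\x-s)\geq\frac12\|\x(\tau^*)-\y\|_2^2$ for every $\x\in\cX$. Second, the bracketing interval can be found without sorting. Take the largest breakpoint with $g\geq s$ and the smallest breakpoint exceeding it.
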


{To compare the algorithm performance, we compute and monitor the natural residual $R_{0.01}(\z_t)$ for those with last-iterate convergence guarantees, or $R_{0.01}(\bar\z_t)$ for those with ergodic convergence, where $\bar\z_t$ denotes the (weighted) average of iterates. }

\paragraph{Performance comparison.}
\label{sec:experiment-nonsmooth}
{We report the corresponding convergence performances as well as the statistics on the number of iterations to reach to $\varepsilon$ accuracy and the resulting solution time} in \cref{fig:linx-tol}. Compared to other problem classes such as LASSO or group fairness classification, while the performance difference between ergodic and last-iterate algorithms is less pronounced for certain subset sizes $s$, last-iterate algorithms generally exhibit faster convergence. Moreover, ergodic algorithms show higher variance across different values of $s$, in contrast to the stable performance of the last-iterate algorithms. The convergence plots show a clear gap between our algorithms and the closest competitors,  \texttt{Adapt EG} and standard  \texttt{EG} {(neither of these have any theoretical convergence guarantees in this setting)}, which widens as the number of iterations increases. Although we do not have formal guarantees for \cref{alg:extragradient-adaptive} under local Lipschitz continuity, its numerical performance suggests a promising direction for future research. 
Both \cref{alg:extragradient-backtracking,alg:extragradient-backtracking-old}, which have a solid theoretical foundation for this problem, match the iteration count of the line-search-free \cref{alg:extragradient-adaptive} to reach target precision $\varepsilon$. While \cref{alg:extragradient-backtracking-old} achieves this at the cost of roughly doubling the solution time due to the stepsize increase trick discussed in \cref{rem:stepsize-increase-trick}, \cref{alg:extragradient-backtracking} has minimal additional overhead and achieves a solution time comparable to that of \cref{alg:extragradient-adaptive}. Overall, \cref{alg:extragradient-adaptive,alg:extragradient-backtracking} achieve the fastest convergence across all instances and subset sizes, and even though it it is slightly slower, \cref{alg:extragradient-backtracking-old} still outperforms all algorithms from the literature in all instances.

\begin{figure}[htbp]
  \centering
  \includegraphics[width=.49\textwidth]{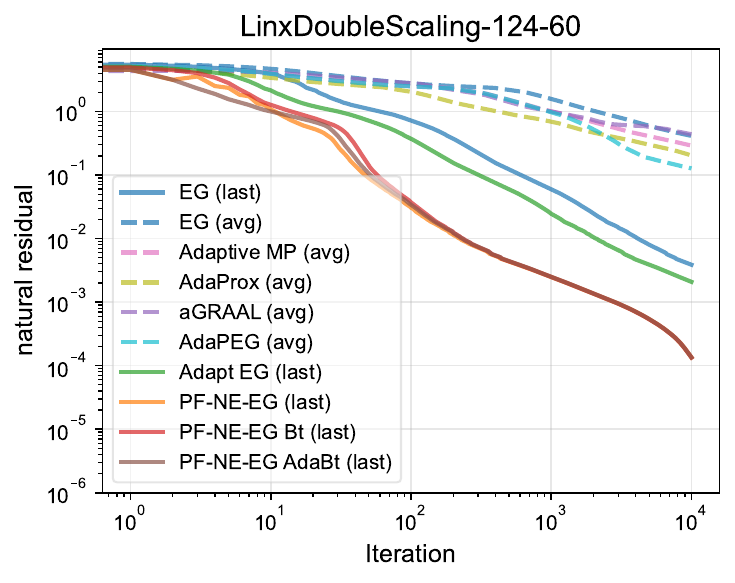}
  \includegraphics[width=.49\textwidth]{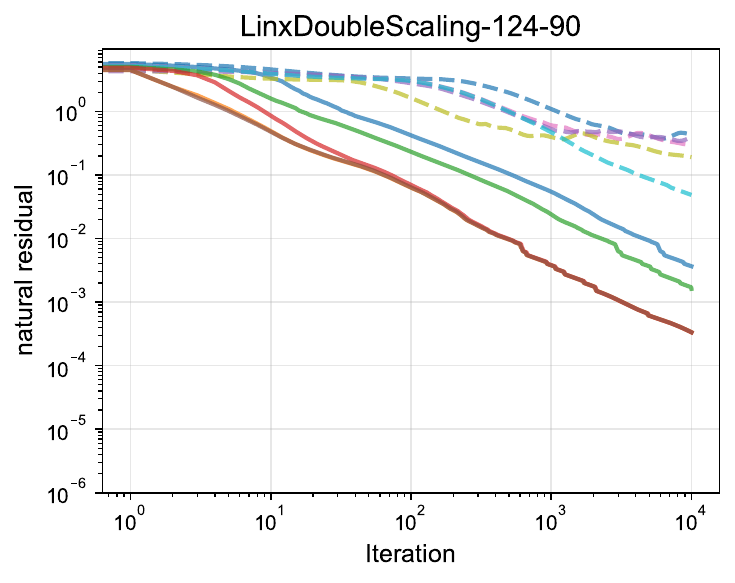}
  \includegraphics[width=.49\textwidth]{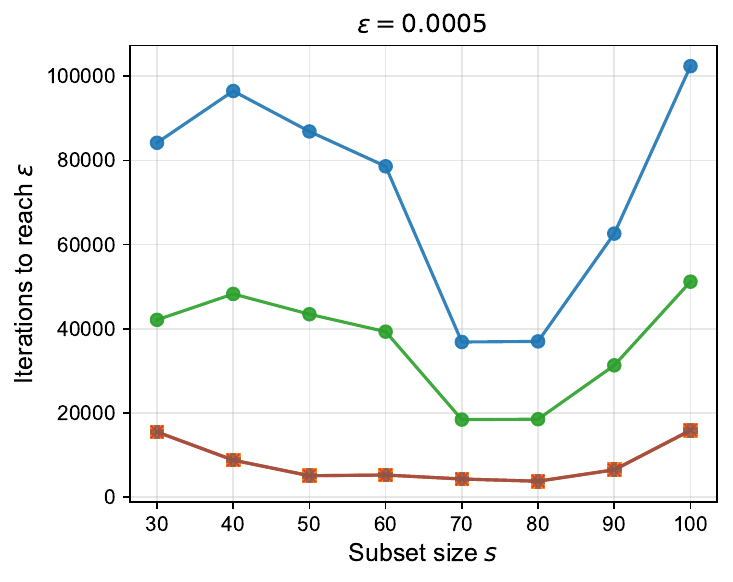}
  \includegraphics[width=.49\textwidth]{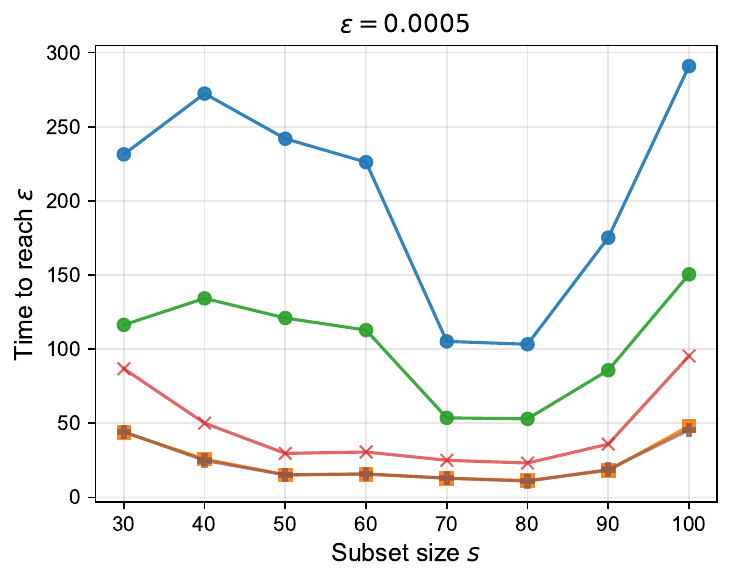}
  \caption{Comparison for convergence and iteration count and solution time (seconds) to reach the desired tolerance for solving linx double-scaling instances. }
  \label{fig:linx-tol}
\end{figure}

\section*{Acknowledgements}
This research was supported in part by AFOSR [Grant FA9550-22-1-0365].

\bibliographystyle{plainnat}
\bibliography{ref}

\appendix

\section{Standard backtracking line search}
\label{sec:appendix}
\label{sec:alg-backtracking-old}

In this section, we introduce an extragradient-type algorithm with standard backtracking line search, formally described in \cref{alg:extragradient-backtracking-old}. Like \cref{alg:extragradient-backtracking}, it is designed for operators with local Lipschitz continuity. While its theoretical convergence rate matches that of \cref{alg:extragradient-backtracking}, the standard backtracking line search procedure can lead to monotonically decreasing stepsizes that are overly conservative. We analyze \cref{alg:extragradient-backtracking-old} as a robust baseline for handling local Lipschitz continuity.

\begin{algorithm}[htbp]
  \caption{Parameter-free non-ergodic extragradient (PF-NE-EG) algorithm with standard backtracking line search}
  \label{alg:extragradient-backtracking-old}
  \begin{algorithmic}
    \Require Initial solution $\z_0\in\cZ$, initial stepsize $\eta_0>0$, $\theta\in(0,1)$, and $\rho\in(0,1)$. 
    \For{$t=1,2,\dots,T$}
      \State Step 1. Stepsize selection: Starting with $\eta=\eta_{t-1}$, decrease it by a factor of $\rho$ iteratively until it satisfies the conditions
      \begin{align}\label{eq:backtracking-old}
        \eta\frac{\|F(\w_t(\eta))-F(\z_t)\|_2}{\|\w_t(\eta)-\z_t\|_2} &\leq \theta < 1, \\
        \eta\frac{\|F(\w_t(\eta))-F(\z_{t+1}(\eta))\|_2}{\|\w_t(\eta)-\z_{t+1}(\eta)\|_2} &\leq 1,\quad\text{if }\w_t(\eta)\neq\z_{t+1}(\eta), \label{eq:backtracking-old-2}
      \end{align}
      where $\w_t(\eta)\coloneqq\Proj_{\cZ}(\z_t-\eta F(\z_t))$ and $\z_{t+1}(\eta)\coloneqq\Proj_{\cZ}(\z_t-\eta F(\w_t(\eta)))$, unless $\w_{t}(\eta)=\z_{t}$, in which case we stop and return $\z_t$. Set  $\eta_t=\eta$. 
      \State Step 2. Extragradient update:
        \begin{align*}
          \w_t &= \Proj_{\cZ}(\z_t-\eta_tF(\z_t)), \\
          \z_{t+1} &= \Proj_{\cZ}(\z_t-\eta_tF(\w_t)).
        \end{align*}
    \EndFor
    \Ensure $\z_T$.
  \end{algorithmic}
\end{algorithm}

The well-definedness and convergence rate analysis for \cref{alg:extragradient-backtracking-old} follow exactly the same proofs as those for \cref{alg:extragradient-backtracking}, presented in \cref{lem:finite-backtracking,prop:extragradient-backtracking}. Specifically, under \cref{assum:local-smooth}, the backtracking line search procedure in \cref{alg:extragradient-backtracking-old} stops in finite time with $\eta_t>0$ at each iteration, and the algorithm achieves an $o(1/\sqrt{T})$ convergence in the extragradient residual. The primary difference in the analysis lies in the total number of backtracking steps, as shown in \cref{lem:finite-backtracking-old}.

\begin{lemma}\label{lem:finite-backtracking-old}
  Suppose \cref{assum:local-smooth} holds. The backtracking line search procedure in \cref{alg:extragradient-backtracking-old} stops decreasing the stepsize within finitely many operations throughout all the iterations. In particular, there exists $\bar\eta>0$ such that $\eta_t\geq\bar\eta$ for all $t\in\N$. 
\end{lemma}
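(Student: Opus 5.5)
The plan is to combine the boundedness argument from the proof of \cref{lem:finite-backtracking} with the fact that in \cref{alg:extragradient-backtracking-old} the stepsize sequence is non-increasing. The starting value for iteration $t$ is $\eta_{t-1}$, and $\eta$ is only ever decreased, so $\eta_t\le\eta_{t-1}$. This monotonicity makes the counting argument simpler than the ratio argument needed for \cref{alg:extragradient-backtracking}.

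\textbf{Step 1: the iterates stay in a fixed compact set.} The accepted stepsize satisfies \eqref{eq:backtracking-old}, so $\eta_tL_t\le\theta<1$ at every iteration. By \cref{lem:descent}, $\|\z_t-\z_*\|_2\le\|\z_0-\z_*\|_2$ and $\|\z_t-\w_t\|_2^2\le\frac{1}{1-\theta}\|\z_t-\z_*\|_2^2$. Hence all $\z_t$ and $\w_t$ lie in the ball
\[
B=\{\z\in\cZ:\|\z-\z_*\|_2\le r\},\qquad r:=(1+(1-\theta)^{-1/2})\|\z_0-\z_*\|_2 .
\]

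\textbf{Step 2: trial points stay in a slightly larger compact set.} The points $\w_t(\eta)$ and $\z_{t+1}(\eta)$ tested during backtracking are not accepted iterates, so Step 1 does not control them. Since $\eta\le\eta_0$ and projection is nonexpansiveness, \eqref{eq:local-smooth-distance-bound} gives
\[
\|\w_t(\eta)-\z_t\|_2\le\eta_0\|F(\z_t)\|_2\le\eta_0 M,\qquad M:=\max_B\|F\|_2<\infty ,
\]
where $M$ is finite because $F$ is continuous and $B$ is compact. In the same way, $\|\z_{t+1}(\eta)-\z_t\|_2\le\eta_0\|F(\w_t(\eta))\|_2$. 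This is bounded by applying the maximum of $\|F\|_2$ over the enlarged compact set $B'=\{\z\in\cZ:\operatorname{dist}(\z,B)\le\eta_0M\}$. So all trial points lie in a compact set $B''$.

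\textbf{Step 3: a uniform Lipschitz constant gives a uniform lower bound.} By \cite[Theorem 2.1.6]{cobzas_lipschitz_2019}, $F$ is $\bar L$-Lipschitz on $B''$. Any $\eta\le\theta/\bar L$ then satisfies both \eqref{eq:backtracking-old} and \eqref{eq:backtracking-old-2}, since $\eta\bar L\le\theta\le1$. Backtracking therefore never reduces a stepsize that is already at most $\theta/\bar L$. Any reduction starts from a value above $\theta/\bar L$ and ends at a value at least $\rho\theta/\bar L$. It follows that
\[
\eta_t\ge\bar\eta:=\min\{\eta_0,\rho\theta/\bar L\}>0\quad\text{for all } t .
\]

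\textbf{Step 4: counting the failed reductions.} Each failed test multiplies $\eta$ by $\rho$, and the stepsize never increases across iterations. The total number of reductions $N$ therefore satisfies $\eta_0\rho^N\ge\bar\eta$, so
\[
N\le\log(\eta_0/\bar\eta)/\log(1/\rho) ,
\]
which is finite.

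The main obstacle is Step 2: making sure the local Lipschitz bound applies to trial points that are never accepted. The fix is the a priori bound $\eta\le\eta_0$ together with continuity of $F$ on compact sets.
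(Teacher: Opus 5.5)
Your proof is correct. It shares the paper's skeleton: the descent lemma keeps $\z_t$ within distance $\|\z_0-\z_*\|_2$ of $\z_*$, local Lipschitz continuity is upgraded to a Lipschitz constant on a compact set, stepsizes below a threshold are never reduced, and non-increasing stepsizes make the number of reductions finite. Where you differ is in how you control the trial points $\w_t(\eta)$ and $\z_{t+1}(\eta)$.

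The paper fixes the neighborhood $\{\z\in\cZ:\|\z-\z_*\|_2\le\|\z_0-\z_*\|_2+1\}$, with Lipschitz constant $L(\z_0,\z_*)$ and operator bound $G(\z_0,\z_*)$. It shows the trial points fall into this neighborhood once the running stepsize satisfies $\eta L(\z_0,\z_*)\le\theta$ and, additionally, $\eta G(\z_0,\z_*)\le 1$. It is this second condition that keeps them within distance $1$.

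You instead use the a priori cap $\eta\le\eta_0$, which holds for every trial stepsize because the scheme never increases $\eta$. This places all trial points, at all trial stepsizes, inside the single compact set $B''$ obtained from two successive enlargements of $B$, so one constant $\bar L$ covers everything.

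Your route gives a case-free argument that delivers three things at once:
\begin{enumerate}[(i)]
\item termination within each iteration (the analogue of \cref{lem:backtracking-stops});
\item the explicit bound $\bar\eta=\min\{\eta_0,\rho\theta/\bar L\}$;
\item the explicit count $N\le\log(\eta_0/\bar\eta)/\log(1/\rho)$.
\end{enumerate}

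The paper's version has two advantages in exchange. Its neighborhood and threshold $\min\{\theta/L(\z_0,\z_*),1/G(\z_0,\z_*)\}$ do not depend on $\eta_0$, whereas your $B''$, and hence $\bar L$, can grow with $\eta_0$. Its confinement step also needs only that the current trial stepsize be small, not a uniform upper bound on all trial stepsizes. Such a uniform bound is unavailable, for instance, under the non-monotone scheme of \cref{alg:extragradient-backtracking}.

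Two minor points:
\begin{enumerate}[(i)]
\item Step 1 tacitly assumes that all earlier line searches terminated. This is closed by a routine induction on $t$, since $B''$ and $\bar L$ do not depend on $t$.
\item The bound on $\|\w_t-\z_*\|_2$ in Step 1 is never used; only $\z_t\in B$ enters Step 2.
\end{enumerate}
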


\begin{proof}
  From \cref{lem:descent}, we have 
  \begin{align*}
    \|\z_{t+1} - \z_*\|_2^2 
    &\leq \|\z_t - \z_*\|_2^2 - (1 - \eta_tL_t)[\|\z_t-\w_t\|_2^2 + \|\z_{t+1} - \w_t\|_2^2]. 
  \end{align*}
  By our line search procedure, we have $\eta_tL_t\leq\theta<1$, thus $\|\z_t-\z_*\|_2 \leq \|\z_0-\z_*\|_2$. This means $\{\z_t\}_{t\in\N}\subseteq\cB(\z_0,\z_*)\coloneqq\{\z\in\cZ:\|\z-\z_*\|_2\leq\|\z_0-\z_*\|_2\}$ is bounded. The local Lipschitz continuity of $F$ implies its Lipschitz continuity over any compact set (see \cite[Theorem 2.1.6]{cobzas_lipschitz_2019}), thus there exists Lipschitz constant $L(\z_0,\z_*)>0$ such that 
  \begin{align}\label{eq:local-lip-old}
    \|F(\z')-F(\z)\|_2 \leq L(\z_0,\z_*) \|\z'-\z\|_2, 
  \end{align}
  for any $\z,\z'\in\cZ$ such that $\|\z-\z_*\|_2,\|\z'-\z_*\|_2\leq\|\z_0-\z_*\|_2+1$. This further implies that there exists a constant $G(\z_0,\z_*)>0$ such that $\|F(\z)\|_2\leq G(\z_0,\z_*)$ for any $\z\in\cZ$ such that $\|\z-\z_*\|_2 \leq \|\z_0-\z_*\|_2+1$. 
  
  Case 1. If $\eta_{t-1}$ never satisfies $\eta_{t-1}L(\z_0,\z_*)\leq\theta$ and $\eta_{t-1}G(\z_0,\z_*)\leq1$, then $\eta_t$ is decreased only finitely many times throughout all the iterations of the algorithm and we stopped due to $\w_t(\eta_t)=\z_t$. 
  
  Case 2. If $\eta_{t-1}$ is sufficiently decreased such that $\eta_{t-1}L(\z_0,\z_*)\leq\theta$ and $\eta_{t-1}G(\z_0,\z_*)\leq1$, by definition of $\w_t(\eta)=\Proj_{\cZ}(\z_t-\eta F(\z_t))$, $\z_*\in\cZ$, and nonexpansiveness of projection, we have 
  \begin{equation}
  \begin{aligned}
    \|\w_{t}(\eta_{t-1})-\z_*\|_2 
    &\leq \|\z_{t}-\eta_{t-1}F(\z_{t})-\z_*\|_2 \\
    &\leq \|\z_{t}-\z_*\|_2+\eta_{t-1}\|F(\z_{t})\|_2 \\
    &\leq \|\z_{t}-\z_*\|_2+\eta_{t-1}G(\z_0,\z_*)
    \leq \|\z_0-\z_*\|_2+1. \label{eq:w-in-neighborhood}
  \end{aligned} 
  \end{equation}
  Therefore, \eqref{eq:local-lip-old} holds for $\z'=\w_t(\eta_{t-1})$ and $\z=\z_t$, and $\eta_{t-1}\frac{\|F(\w_{t}(\eta_{t-1}))-F(\z_{t})\|_2}{\|\w_{t}(\eta_{t-1})-\z_{t}\|_2} \leq \eta_{t-1}L(\z_0,\z_*) \leq\theta$. Moreover, using the definition of $\z_{t+1}(\eta)=\Proj_{\cZ}(\z_t-\eta F(\w_t(\eta)))$, $\z_*\in\cZ$, and nonexpansiveness of projection, we have 
  \begin{align*}
    \|\z_{t+1}(\eta_{t-1})-\z_*\|_2
    &\leq \|\z_{t}-\eta_{t-1}F(\w_{t}(\eta_{t-1}))-\z_*\|_2  \\
    &\leq \|\z_{t}-\z_*\|_2+\eta_{t-1}\|F(\w_{t}(\eta_{t-1}))\|_2 \\
    &\leq \|\z_{t}-\z_*\|_2+\eta_{t-1}G(\z_0,\z_*)
    \leq \|\z_0-\z_*\|_2+1, 
  \end{align*}
  where the third inequality follows from the definition of $G(\z_0,\z_*)$ and \eqref{eq:w-in-neighborhood}. 
  Thus, \eqref{eq:local-lip-old} holds for $\z'=\w_t(\eta_{t-1})$ and $\z=\z_{t+1}(\eta_{t-1})$, and $\eta_{t-1}\frac{\|F(\w_{t}(\eta_{t-1}))-F(\z_{t+1}(\eta_{t-1}))\|_2}{\|\w_{t}(\eta_{t-1})-\z_{t+1}(\eta_{t-1})\|_2}\leq\eta_{t-1}L(\z_0,\z_*)\leq1$ if $\w_t(\eta_{t-1})\neq\z_{t+1}(\eta_{t-1})$. This means \eqref{eq:backtracking-old} and \eqref{eq:backtracking-old-2} are satisfied by $\eta=\eta_{t-1}$, therefore $\eta_t=\eta_{t-1}$ is no longer decreased in the subsequent iterations.
\end{proof}

\begin{remark}\label{rem:stepsize-increase-trick}
The limitation of non-increasing stepsize of the standard backtracking procedure can be resolved by a simple trick (see e.g., \citep{lu_primal-dual_2025}). The idea is that, at the start of a new iteration $t$, instead of reusing the stepsize $\eta=\eta_{t-1}$ from the previous iteration, we first proactively increase it by a factor $\rho^{-1} > 1$. This allows the stepsize to increase as needed, at the cost of at most two more operator evaluations per iteration. However, it no longer guarantees a finite total number of backtracking steps. As the iterates approach the VI solution, the local Lipschitz constant stabilizes, whereas the aggressive initial increase at each iteration pushes the stepsize beyond its ideal choice (i.e., the reciprocal of the local Lipschitz constant), which costs an additional backtracking step. This explains why, in the numerical experiments in \cref{sec:numerical}, \cref{alg:extragradient-backtracking-old} requires roughly twice the solution time of \cref{alg:extragradient-adaptive,alg:extragradient-backtracking}.
To keep our discussion simple, 
we analyze \cref{alg:extragradient-backtracking-old} without this modification.
\end{remark} 

\end{document}